\documentclass[11pt,reqno]{amsart}

\usepackage[T1]{fontenc}
\usepackage{lmodern}
\usepackage{microtype}
\usepackage{amsmath,amssymb,mathtools}
\usepackage{enumitem}
\usepackage[hidelinks]{hyperref}
\usepackage[nameinlink,noabbrev]{cleveref}
\hypersetup{
  pdftitle={Sharp directed distances between degree classes of critical Sobolev maps on spheres},
  pdfauthor={Rupert L. Frank and Paata Ivanisvili},
  pdfsubject={Critical Sobolev maps, degree classes, directed distances, and bubbling},
  pdfkeywords={Sobolev maps, topological degree, homotopy classes, directed distance, bubbling, concentration}
}

\numberwithin{equation}{section}

\newtheorem{theorem}{Theorem}[section]
\newtheorem{Proposition}[theorem]{Proposition}
\newtheorem{lemma}[theorem]{Lemma}

\newcommand{\Sph}{\mathbb S}
\newcommand{\R}{\mathbb R}
\newcommand{\Z}{\mathbb Z}
\newcommand{\E}{\mathcal E}
\newcommand{\D}{\mathrm D}
\newcommand{\dd}{\,\mathrm d}
\newcommand{\distdir}{\operatorname{Dist}}
\newcommand{\weakstar}{\mathrel{\stackrel{*}{\rightharpoonup}}}
\newcommand{\Jac}{\operatorname{Jac}}

\newcommand{\so}{\mathfrak{so}}
\newcommand{\VMO}{\mathrm{VMO}}
\newcommand{\esssup}{\operatorname*{ess\,sup}}

\title[Homotopy classes of Sobolev maps on spheres]{The distance between homotopy classes\\ of Sobolev maps on spheres}

\author{Rupert L. Frank}
    \address[Rupert L. Frank]{Mathe\-matisches Institut, Ludwig-Maximilians Universit\"at M\"unchen, The\-resien\-str.~39, 80333 M\"unchen, Germany, and Munich Center for Quantum Science and Technology, Schel\-ling\-str.~4, 80799 M\"unchen, Germany, and Mathematics 253-37, Caltech, Pasa\-de\-na, CA 91125, USA}
    \email{r.frank@lmu.de}

\author[P.~Ivanisvili]{Paata Ivanisvili}
\address[P.~Ivanisvili]{Department of Mathematics, University of California, Irvine, 510C Rowland Hall, Irvine, CA 92697-3875, USA}
\email{pivanisv@uci.edu}

\subjclass[2020]{46E35, 58D15, 49Q20, 58E20}
\keywords{Critical Sobolev maps, topological degree, homotopy classes, directed distance, bubbling, concentration measures}

\begin{document}

\begin{abstract}
    We consider self-maps of a sphere in the critical Sobolev space with a given Brouwer degree. Our main result is that the (directed) distance between maps of different degrees is equal to an explicit constant times the difference in degrees. In the case of the 2-sphere this resolves an open problem by Brezis.
\end{abstract}

\maketitle

\section{Introduction and main result}\label{sec:introduction}

\subsection{The critical degree classes}
In this paper we are interested in maps from the unit sphere $\Sph^n=\{x\in\R^{n+1}:|x|=1\}$, $n\geq 2$, to itself that belong to the Sobolev space
\begin{equation}\label{eq:critical-space}
  W^{1,n}(\Sph^n;\Sph^n)
  :=\{u\in W^{1,n}(\Sph^n;\R^{n+1}):|u|=1\text{ a.e.}\}.
\end{equation}
As we will recall in detail in Section \ref{sec:preliminaries} below, such maps have a well-defined Brouwer degree. For $d\in\Z$, set
\begin{equation}\label{eq:degree-class}
  \E_d^{(n)}
  :=\{u\in W^{1,n}(\Sph^n;\Sph^n):\deg u=d\}.
\end{equation}
Our main result gives an explicit formula for the min-max quantity
$$
\sup_{f\in\E_{d_1}^{(n)}}
    \inf_{g\in\E_{d_2}^{(n)}} \int_{\Sph^n}|\D_\tau f-\D_\tau g|^n\,\dd\sigma \,,
$$
where $D_\tau u$ denotes the weak (tangential) derivative of $u\in W^{1,n}(\Sph^n;\Sph^n)$ and we employ the Hilbert--Schmidt norm, that is, if $(\tau_1,\dots,\tau_n)$ is a local orthonormal frame of $T\Sph^n$, then
\[
  |\D_\tau f - \D_\tau g|^2=\sum_{i=1}^n|\partial_{\tau_i}f - \partial_{\tau_i} g|^2.
\]
Also, $\dd\sigma$ denotes surface measure on $\Sph^n$.

Introducing the constant
$$
\kappa_n:=n^{n/2}|\Sph^n|= n^{n/2} \frac{2\pi^{(n+1)/2}}{\Gamma((n+1)/2)},
$$
we can state our main result.

\begin{theorem}\label[theorem]{thm:main}
Let $n\ge2$ and $d_1,d_2\in\Z$.  Then
\begin{equation}\label{eq:main}
\sup_{f\in\E_{d_1}^{(n)}}\inf_{g\in\E_{d_2}^{(n)}}
  \int_{\Sph^n}|\D_\tau f-\D_\tau g|^n\,\dd\sigma
  =\kappa_n \, |d_1-d_2| \,.
\end{equation}
\end{theorem}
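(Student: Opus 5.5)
We prove the two inequalities separately: a lower bound valid for every $f\in\E_{d_1}^{(n)}$, and an upper bound obtained by making, for each $f$, a nearly optimal choice of $g$ (a bubbling construction). Throughout we use the pointwise Hadamard inequality $|\det A|\le n^{-n/2}|A|^n$ for $A\in\R^{(n+1)\times n}$, with the Jacobian read on the tangent planes. Equality holds exactly when $A$ is conformal, i.e.\ a multiple of an isometry onto its image.

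\emph{Lower bound.} Set $\Jac u=\det(\D_\tau u)$ relative to the orientations, so that $\int_{\Sph^n}\Jac u\,\dd\sigma=|\Sph^n|\deg u$. This is the degree formula for $W^{1,n}$ maps recalled in Section~\ref{sec:preliminaries}. The idea is to write $h=f-g$ and control $\int(\Jac f-\Jac g)$ by $\int|\D_\tau h|^n$. This does not follow from a pointwise estimate, since $\Jac f-\Jac g$ is not controlled by $|\D h|^n$ alone. Instead we use $\Jac u=\det(u,\partial_1u,\dots,\partial_nu)$, which equals $u\cdot(\partial_1 u\wedge\dots\wedge\partial_n u)$ because $|u|=1$. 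By multilinearity and the divergence structure (null Lagrangians), $\int\Jac f-\int\Jac g$ should equal an integral of $\det(h,\partial_1h,\dots,\partial_nh)$ plus terms that vanish. For instance, for $n=2$,
\[
\int f\cdot(\partial_1f\times\partial_2f)-\int g\cdot(\partial_1 g\times\partial_2 g)
\]
equals the integral of a sum of terms that are trilinear in $(f,g,h)$ with at least one derivative on $h$. I do not expect exact cancellation, so my first attempt is different. The degree is additive under concentration, and the natural sharp form is
\[
\int|\D_\tau h|^n\ge n^{n/2}\Big|\int(\Jac f-\Jac g)\Big|,
\]
which requires care. The cleaner route is an approximation argument. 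By density of smooth maps in each $\E_d^{(n)}$ (Schoen--Uhlenbeck/Brezis--Nirenberg), it suffices to produce a single $f\in\E_{d_1}^{(n)}$ (for the lower bound on the sup) with $\inf_g\int|\D f-\D g|^n\ge\kappa_n|d_1-d_2|-\varepsilon$. Take $f$ to be a smooth map of degree $d_1$ composed with a conformal dilation concentrating almost all of the energy into bubbles. Suppose $g_k$ is a minimizing sequence for a sequence of such $f_k$. Then $f_k-g_k$ converges weakly, and we can use the concentration-compactness decomposition of $W^{1,n}$ maps: the degree defect between $f_k$ and $g_k$ must be carried by a concentration measure of $|\D(f_k-g_k)|^n$. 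At an atom where $f_k$ is (near) constant after blow-up while $g_k$ carries degree $m$, the energy is at least $\int|\D g|^n\ge n^{n/2}|\int\Jac g|=\kappa_n|m|$. Summing the atoms gives $\kappa_n|d_1-d_2|$. So the sup is attained by forcing the difference to live where $f$ is trivial.

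\emph{Upper bound.} Given any $f\in\E_{d_1}^{(n)}$ and $\varepsilon>0$, we construct $g$. By absolute continuity of $|\D f|^n$, pick a point $x_0$ and a small ball $B_r(x_0)$ with $\int_{B_r}|\D f|^n<\varepsilon$. Inside it, glue in $|d_1-d_2|$ tiny stereographic bubbles of degree $\pm1$. Each is a conformal map pulled back by a dilation, and has energy $\kappa_n$ because it is conformal of degree one: $|\D u|^n=n^{n/2}\Jac u$. The gluing is done after first making $f$ nearly constant on a smaller ball via a cutoff. Because $n$ is the conformal exponent, the $W^{1,n}$ cost of that flattening is small, using the logarithmic cutoff trick. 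Then $\int|\D f-\D g|^n$ is at most $\kappa_n|d_1-d_2|+o(1)$, since the supports separate up to $\varepsilon$-errors and the bubbles use the elementary inequality $|a+b|^n\le(1+\delta)|a|^n+C_\delta|b|^n$. Taking $\varepsilon\to0$ gives the upper bound.

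\emph{Main obstacle.} The main obstacle is the lower bound: showing that no $g$ can split the degree change more cheaply by interacting with the nontrivial part of $f$. I would first try to choose the extremal $f$ to be a highly concentrated map, essentially constant off a tiny set. Then the weak limit argument reduces everything to the bubble energy inequality $\int|\D u|^n\ge\kappa_n|\deg u|$, and the sharp constant comes from the equality case of Hadamard.
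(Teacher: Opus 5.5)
Your upper bound is essentially the paper's: flatten near a point and insert $|d_1-d_2|$ bubbles of degree $\operatorname{sgn}(d_2-d_1)$. A general $f\in\E_{d_1}^{(n)}$ is only VMO, so do the cutoff-and-project step for smooth $f$ and pass to general $f$ by density and the triangle inequality.

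The lower bound, however, has a genuine gap, and your choice of extremal $f$ cannot close it.

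\emph{Concentration gains nothing.} Let $\phi$ be an orientation-preserving M\"obius map. The $n$-energy is conformally invariant, and $g\mapsto g\circ\phi$ is a bijection of $\E_{d_2}^{(n)}$. Hence $\inf_{g\in\E_{d_2}^{(n)}}\int|\D(f-g)|^n\,\dd\sigma$ is the same for $f\circ\phi$ as for $f$.

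\emph{A fixed smooth $f$ need not realize $\kappa_n|d_1-d_2|$.} In particular there is no lower bound ``valid for every $f$''. Take $d_1=1$, $d_2=0$, $f=\mathrm{id}$, and the Lipschitz fold $g(x)=(|x_1|,x_2,\dots,x_{n+1})$, which has degree $0$. Then $|\D(f-g)|^n=2^n(1-x_1^2)^{n/2}\mathbf{1}_{\{x_1<0\}}$, whose integral is $<2^{n-1}|\Sph^n|\le\kappa_n$; for $n=2$ it equals $16\pi/3<8\pi$. Composing $f$ and $g$ with dilations $\phi_k$ gives exactly your concentrating $f_k$, together with competitors whose cost stays bounded away from $\kappa_n$.

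\emph{Why it fails structurally.} A concentrating $f_k$ converges in $L^1$ to a constant, so the limit no longer remembers the degree $d_1$. Also, $|\D f_k|^n$ develops atoms at the same points where $g_k$ bubbles, so the atoms of $|\D f_k|^n$ and $|\D g_k|^n$ can partially cancel in $|\D(f_k-g_k)|^n$. Your step ``energy at the atom $\ge\int|\D g|^n\ge\kappa_n|m|$'' silently assumes $f_k$ is trivial there. Nothing in your argument prevents $g_k$ from deviating from $f_k$ globally without any concentration either. For instance, $g=Sf$ with $S$ a reflection costs $32\pi/3<16\pi$ for $f=\mathrm{id}$, $d_2=-1$, $n=2$.

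The missing idea is a reference map that forces every competitor with bounded difference energy to share its $L^1$ limit, while that limit is a fixed smooth map of degree $d_1$ with atomless energy measure. The paper obtains this by oscillatory pinning:
\[
F_j(x)=\exp(B(jx_{n+1}))f_0(x),
\]
where $f_0$ is smooth of degree $d_1$ and $B$ is a periodic profile cycling through small rotations $Q$ whose orbits $\{Qp\}$ affinely span $\R^{n+1}$.
\begin{itemize}
\item By Rellich and periodic averaging, any limit $h$ of $F_j-g_j$ satisfies $2Qf_0\cdot h=|h|^2$ for all $Q$, hence $h=0$.
\item Then $u_j=\Pi(f_0-F_j+g_j)$ has degree $d_2$ and converges to $f_0$ in $L^n$.
\item The Detaille--Van Schaftingen bubbling theorem puts atomic mass at least $\kappa_n|d_1-d_2|$ into the weak-$*$ limit of $|\D u_j|^n$.
\item The bound $|\D u_j|^n\le(1-\rho)^{-n}\bigl((1+\delta)|\D(F_j-g_j)|^n+C_\delta|\D f_0|^n\bigr)$ transfers these atoms to the limit of $|\D(F_j-g_j)|^n$, because $|\D f_0|^n\,\dd\sigma$ has no atoms.
\end{itemize}
Your heuristic that the degree defect sits in atoms each costing $\kappa_n|m|$ is exactly that sharp bubbling theorem. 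Its hypothesis, a common $L^1$ limit of fixed degree, is what your construction fails to supply.
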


The equality \eqref{eq:main} is also true for $n=1$, as shown in \cite[Theorem 1.4(1)]{BrezisMironescuShafrir2016}. For $n=2$, the inequality $\leq$ in \eqref{eq:main} was already known (see \cite{Brezis2023} and references therein) and the inequality $\geq$ was known when $0\leq d_1<d_2$ (see \cite[Proposition 7.3(2)]{BrezisMironescuShafrir2016}), but the inequality $\geq$ in the general case is new and gives an affirmative answer to Problem 5.9 on Brezis's list `Some of my favorite open problems' \cite{Brezis2023}. To the best of our knowledge, before the present work no exact all-degree formula was known beyond the one-dimensional case and the two-dimensional ranges above.

Theorem \ref{thm:main} has the following consequence. If we set, following \cite{BrezisMironescuShafrir2016},
\begin{equation}
    \label{eq:directed-distance}
    \distdir_n(\E_{d_1}^{(n)},\E_{d_2}^{(n)})
  :=\sup_{f\in\E_{d_1}^{(n)}}
    \inf_{g\in\E_{d_2}^{(n)}} \left( \int_{\Sph^n}|\D_\tau f-\D_\tau g|^n\,\dd\sigma \right)^{1/n},
\end{equation}
then we obtain the symmetry relation
\begin{equation}
    \label{eq:symmetric}
    \distdir_n(\E_{d_1}^{(n)},\E_{d_2}^{(n)}) = \distdir_n(\E_{d_2}^{(n)},\E_{d_1}^{(n)}) \,.
\end{equation}
Indeed, by Theorem \ref{thm:main}, both sides are equal to $(\kappa_n \, |d_1-d_2|)^{1/n}$. The equality \eqref{eq:symmetric} is new for all $n\geq 3$. In particular, it gives an affirmative answer to Open Problem 1 in \cite{BrezisMironescuShafrir2016} in the case $s=1$, $p=n$. Also, Theorem \ref{thm:main} answers Open Problem 2 in \cite{BrezisMironescuShafrir2016} for $p=n$.


\subsection*{Background}
Before describing the strategy of our proof, let us mention some previous results on this and related problems. The interaction between degree and concentration in critical Sobolev spaces arose prominently in the work of Brezis and Coron on harmonic maps \cite[Lemmas~1--2]{BrezisCoron1983}; related relaxation phenomena were developed by Bethuel, Brezis, and Coron \cite[Theorems~2--3]{BethuelBrezisCoron1990}.  Brezis and Nirenberg subsequently developed degree theory in $\VMO$ \cite[Section~I.3 and Theorem~1]{BrezisNirenberg1995}.

Brezis, Mironescu, and Shafrir \cite{BrezisMironescuShafrir2016} introduced the `directed distance' 
\begin{equation}
    \label{eq:directeddistgen}
    \distdir_{p}(\E_{d_1}^{(n,p)},\E_{d_2}^{(n,p)}) := \sup_{f\in\E_{d_1}^{(n,p)}}
    \inf_{g\in\E_{d_2}^{(n,p)}} \| f - g \|_{\dot W^{n/p,p}(\Sph^n;\Sph^n)} \,,
\end{equation}
extending \eqref{eq:directed-distance} to more general Sobolev spaces $W^{n/p,p}(\Sph^n;\Sph^n)$ with $1\leq p<\infty$. Moreover, $\E_d^{(n,p)}$ consists of maps in $W^{n/p,p}(\Sph^n;\Sph^n)$ with degree $d$ (which is well-defined in $W^{n/p,p}(\Sph^n,\Sph^n)\subset VMO(\Sph^n;\Sph^n)$). In their Open Problem 1 they asked whether $\distdir_{p}(\E_{d_1}^{(n,p)},\E_{d_2}^{(n,p)}) = \distdir_{p}(\E_{d_2}^{(n,p)},\E_{d_1}^{(n,p)})$ or, even better, whether it only depends on $|d_1-d_2|$. In their Open Problem 2 they asked whether one has a lower bound by some constant times $|d_1-d_2|^{1/p}$. Since the directed distance is bounded above by the corresponding Hausdorff distance, \cite[Theorem~1.7(1)]{BrezisMironescuShafrir2016} gives
\begin{equation}\label{eq:BMS-upper}
  \distdir_{p}(\E_{d_1}^{(n,p)},\E_{d_2}^{(n,p)})
  \le C_{p,n}|d_1-d_2|^{1/p}.
\end{equation}
For a fixed source degree, \cite[Proposition~7.8]{BrezisMironescuShafrir2016} gives
\begin{equation}\label{eq:BMS-fixed-source}
  \distdir_{p}(\E_{d_1}^{(n,p)},\E_{d_2}^{(n,p)})
  \ge c_{p,n,d_1}|d_1-d_2|^{1/p},
\end{equation}
where the constant may depend on $d_1$. When $1\le p\le n+1$ and $d_1d_2\le0$, they proved the uniform estimate
\begin{equation}\label{eq:BMS-opposite-sign}
  \distdir_{p}(\E_{d_1}^{(n,p)},\E_{d_2}^{(n,p)})
  \ge c_{p,n}|d_1-d_2|^{1/p} \,,
\end{equation}
see \cite[Proposition~7.5]{BrezisMironescuShafrir2016}.  For the case $p=n$ that we are interested in, these estimates give the correct order of growth in every fixed-source regime and uniformly for opposite-sign degrees, but do not identify the sharp constant or imply symmetry for arbitrary pairs.

As we have already mentioned, the analogous problem for $n=1=p$ was completely settled by \cite[Theorem~1.4(1)]{BrezisMironescuShafrir2016}. Optimal results for $n=1$ and $1<p<\infty$ appear in \cite[Theorem~1.1 and Corollary~1.2]{Shafrir2018}.

In passing, we note that if one replaces the outer supremum in \eqref{eq:directeddistgen} by an infimum, one obtains zero; see \cite[Theorem~4(1)]{BrezisMironescuShafrir2016}. For related results in sub-critical Sobolev spaces we refer to \cite[Theorem~3.4]{LeviShafrir2014} and \cite[Theorem~1 and Remark~2.1]{RubinsteinShafrir2007}.

A recent development, which is decisive for our proof, is the heterotopic energy of Detaille and Van Schaftingen.  Their formula identifies the least $W^{1,n}$ energy of maps in one homotopy class converging almost everywhere to a map in another class; for sphere targets, the defect is exactly $\kappa_n$ times the difference of degree \cite[Theorem~1.1 and Corollary~1.2]{DetailleVanSchaftingen}.  Their bubbling theorem is stronger than the total-energy inequality: it locates the topological defect in atoms of the limiting energy measure \cite[Theorem~5.1]{DetailleVanSchaftingen}. In the introduction of their paper, they note explicitly that the heterotopic energy has ``no apparent formal mathematical connection'' with the earlier directed-distance functionals. A by-product of our paper is to develop such a connection.


\subsection*{Outline of the proof}

To prove Theorem \ref{thm:main} we will prove an upper and a lower bound on the left side in \eqref{eq:main}.

The proof of the upper bound is much easier than that of the lower bound and is presented in Section \ref{sec:upper}. Similar arguments have appeared before in the literature. The idea is to locally insert a bubble. The minimal energy carried by one unit of Brouwer degree is $\kappa_n$, as is made precise in \Cref{prop:bubble-energy}. The intuition is that therefore it costs an energy of at most $|d_2-d_1|$ times $\kappa_n$ to change the degree $d_1$ of a given map $f$ to $d_2$ of some other map.

The proof of the lower bound takes up Sections \ref{sec:bubbling}, \ref{sec:pinning-construction} and 
\ref{sec:lower}. In Section \ref{sec:pinning-construction} we first construct a finite collection of rotations, all arbitrarily close to the identity, such that the orbit of every $p\in\Sph^n$ affinely spans $\R^{n+1}$.  Applying these rotations to a fixed smooth map $f_0$ in rapidly alternating phases gives sphere-valued maps $F_j$ of the same degree as $f_0$. The maps $F_j$ are `hard to approximate' in the sense that if a sequence $g_j$ of sphere-valued maps satisfies a uniform $L^n$-bound on $\D(F_j-g_j)$, compactness and periodic averaging force $F_j-g_j\to0$ in $L^n$; this is \Cref{prop:pinning}. We call this mechanism oscillatory pinning.

Now for near-minimizers $g_j$ corresponding to $F_j$, we remove the oscillation from the difference and radially normalize:
\[
  u_j:=\frac{f_0-(F_j-g_j)}{|f_0-(F_j-g_j)|}.
\]
Because $F_j$ is uniformly close to $f_0$, the denominator stays away from zero and $u_j$ has the degree of $g_j$.  Pinning gives $u_j\to f_0$ in $L^n$. Thus, we are in the situation where all the maps $u_j$ have a fixed degree $d_2$, while their $L^n$ limit has degree $d_1$. A special case of the bubbling theorem of Detaille and Van Schaftingen (see \Cref{prop:atomic-defect}) forces at least $\kappa_n \, |d_1-d_2|$ of the limiting energy of $u_j$ into atoms.  The derivative of the fixed smooth background $f_0$ contributes only an absolutely continuous measure.  An atom-by-atom comparison therefore transfers the entire topological cost to the limiting measure of $|\D(F_j-g_j)|^n$.  Comparing atoms, rather than total $L^n$ energies, is the step that works uniformly in every dimension. The details of this argument appear in Section \ref{sec:lower}.


\section{Degree and homotopy}\label{sec:preliminaries}

In this section we collect some well-known facts that we need in what follows. Throughout, derivatives are tangential derivatives on $\Sph^n$, and we often write $\D u$ for $\D_\tau u$.

Let $(\tau_1,\dots,\tau_n)$ be a positively oriented local orthonormal frame of $T\Sph^n$. For $u\in W^{1,n}(\Sph^n;\Sph^n)$, define
\begin{equation}\label{eq:weak-jac}
  \Jac u:=\det\bigl(u,\partial_{\tau_1}u,\dots,\partial_{\tau_n}u\bigr).
\end{equation}
The definition is independent of the positively oriented orthonormal frame. Since $|u|=1$ a.e., one has
\[
  u\cdot\partial_{\tau_i}u=0
  \qquad\text{a.e. for }i=1,\dots,n.
\]
Consequently, if $s_1,\dots,s_n$ are the singular values of $\D u$, then
\begin{equation}
    \label{eq:jacobianbound}
      |\Jac u|=\prod_{i=1}^n s_i
  \le\left(\frac{s_1^2+\cdots+s_n^2}{n}\right)^{n/2}
  =n^{-n/2}|\D u|^n.
\end{equation}
Thus $\Jac u\in L^1$ and
\begin{equation}\label{eq:degree-def}
  \deg u:=\frac1{|\Sph^n|}\int_{\Sph^n}\Jac u\,\dd\sigma
\end{equation}
is well-defined.

For smooth functions $u$, $\deg u$ coincides with one of the standard definitions of the Brouwer degree and, in particular, it is an integer. In \Cref{prop:density-classification} we recall that $C^\infty(\Sph^n;\Sph^n)$ is strongly dense in $W^{1,n}(\Sph^n;\Sph^n)$ and that the degree is continuous under strong $W^{1,n}$ convergence. It follows from these facts that $\deg u\in\Z$ for all $u\in W^{1,n}(\Sph^n;\Sph^n)$. For further discussion of the degree and its properties, we refer to \cite{BrezisNirenberg1995,BrezisMironescuShafrir2016} and references therein.

\begin{Proposition}[Critical density and classification]\label[Proposition]{prop:density-classification}
The following hold:
\begin{enumerate}[label=\textup{(\roman*)}]
\item $C^\infty(\Sph^n;\Sph^n)$ is strongly dense in $W^{1,n}(\Sph^n;\Sph^n)$;
\item degree is continuous under strong $W^{1,n}$ convergence;
\item for $u,v\in W^{1,n}(\Sph^n;\Sph^n)$, we have $\deg u=\deg v$ if and only if $u$ and $v$ lie in the same $W^{1,n}$ path component;
\item for every $u\in W^{1,n}(\Sph^n;\Sph^n)$,
\begin{equation}\label{eq:degree-energy}
  \int_{\Sph^n}|\D u|^n\,\dd\sigma
  \ge\kappa_n|\deg u|.
\end{equation}
\end{enumerate}
\end{Proposition}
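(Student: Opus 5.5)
We treat the four parts in order. Parts (i) and (ii) are standard facts about critical Sobolev maps, and (iii) and (iv) follow from them. For (i) we use the Schoen--Uhlenbeck / Brezis--Nirenberg argument. Since $W^{1,n}(\Sph^n)\subset\VMO$, the mollified map $u_\varepsilon=u*\rho_\varepsilon$ (mollify in a tubular neighbourhood, or average over geodesic balls) satisfies $\operatorname{dist}(u_\varepsilon(x),\Sph^n)\le \bigl(\fint_{B_\varepsilon(x)}\fint_{B_\varepsilon(x)}|u(y)-u(z)|^n\bigr)^{1/n}\lesssim \bigl(\int_{B_{2\varepsilon}(x)}|\D u|^n\bigr)^{1/n}$ by Poincaré. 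By absolute continuity of the integral, this tends to $0$ uniformly in $x$. Hence for small $\varepsilon$ we have $|u_\varepsilon|\ge 1/2$, and $u_\varepsilon/|u_\varepsilon|$ is smooth. It converges to $u$ in $W^{1,n}$ because $u_\varepsilon\to u$ in $W^{1,n}$ and the projection $y\mapsto y/|y|$ is smooth with bounded derivatives on $\{|y|\ge1/2\}$.

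For (ii), if $u_k\to u$ in $W^{1,n}$ then $\Jac u_k\to\Jac u$ in $L^1$. This holds because $\Jac$ is multilinear in $(u,\partial_{\tau_1}u,\dots,\partial_{\tau_n}u)$. We telescope the difference of determinants: each term contains one factor $\partial_{\tau_i}(u_k-u)$, controlled in $L^n$, with the remaining derivative factors bounded in $L^n$ and the $u$-factor bounded by $1$ in $L^\infty$. The one factor $u_k-u$ is handled by dominated convergence after passing to an a.e.\ convergent subsequence, using $|u_k-u|\le2$. Generalized Hölder with exponents summing to $1$ then gives $L^1$ convergence for every subsequence, hence for the whole sequence. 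Integrality of the degree follows, as noted in the text.

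For (iii), one direction is immediate: along a continuous path in $W^{1,n}$ the degree is continuous and integer-valued, hence constant. For the converse, we first move $u$ and $v$ to smooth maps within their path components. If $\varphi_k\to u$ smoothly approximates as in (i), then for $k$ large the straight segment $(1-t)u+t\varphi_k$ stays uniformly away from $0$ by the same VMO estimate applied to both maps at a common scale. More simply, we use the continuous family $t\mapsto u_{t\varepsilon}/|u_{t\varepsilon}|$, $t\in(0,1]$, which is a $W^{1,n}$-continuous path tending to $u$ as $t\to0$. Two smooth maps of equal degree are smoothly homotopic by Hopf's theorem, and a smooth homotopy is a continuous path in $W^{1,n}$.

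For (iv), we combine \eqref{eq:jacobianbound} with \eqref{eq:degree-def}:
\[
\kappa_n|\deg u| = n^{n/2}\Bigl|\int\Jac u\Bigr|\le n^{n/2}\int|\Jac u|\le\int|\D u|^n .
\]
The main technical point is the uniform VMO estimate in (i) and its use to build the path in (iii). Everything else is algebra or classical topology.
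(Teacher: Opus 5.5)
Your proposal is correct. Parts (i), (ii) and (iv) follow the paper's lines. For (i) the paper cites the Schoen--Uhlenbeck argument that you reproduce. For (ii) it mentions the same multilinearity/H\"older/dominated-convergence argument you give. Part (iv) is the identical chain of inequalities.

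The genuine difference is in (iii). The paper cites Brezis--Li's identification of $W^{1,n}$ homotopy classes with continuous homotopy classes, then applies Hopf's theorem together with (ii). You instead build the path by hand. Since $\sup_x\int_{B_{2\varepsilon}(x)}|\D u|^n\to0$, you get $|u_\varepsilon|\ge1/2$ for \emph{all} $\varepsilon\le\varepsilon_0$, not just along a sequence. Hence $\varepsilon\mapsto\Pi(u_\varepsilon)$ is a $W^{1,n}$-continuous path on $(0,\varepsilon_0]$ that extends continuously to $u$ at $\varepsilon=0$, joining $u$ to a smooth map of the same degree. Hopf's theorem, with the homotopy smoothed, then finishes. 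This makes (iii) self-contained and is essentially the mechanism behind the cited result in this special case. The citation only buys generality (other domains and targets) that is not needed here.

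One caution: drop your first suggestion, that the straight segment $(1-t)u+t\varphi_k$ stays uniformly away from $0$. Strong $W^{1,n}$ convergence does not give $\esssup|u-\varphi_k|<2$. Take a map that winds along a great circle with angle $\log\log(1/|x|)$ near a point. It lies in $W^{1,n}$, since $\int\frac{\dd r}{r\log^n(1/r)}<\infty$ for $n\ge2$. Yet it is nearly antipodal to its regularization at scale $\varepsilon$ on a set of positive measure, around $|x|\approx\varepsilon^{e^\pi}$. So that segment comes arbitrarily close to $0$. The mollification path you then adopt is the right argument.
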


\begin{proof}
Assertion (i) for $n=2$ is due to \cite{SchoenUhlenbeck1983}. The same proof extends to $n\ge 3$, see \cite[Proposition~A.2]{BrezisLi2001} and also \cite[Lemma~2.1]{BrezisMironescuShafrir2016}.

Assertion (ii) is \cite[Theorem~2.3]{BrezisMironescuShafrir2016}.  It also follows directly from the Jacobian formula for the degree, the multilinearity of the determinant, H\"older's inequality, and dominated convergence.

For (iii), \cite[Proposition~0.3]{BrezisLi2001} identifies the $W^{1,n}$ homotopy classes with the ordinary continuous homotopy classes.  By the classical Hopf classification, continuous maps $\Sph^n\to\Sph^n$ are homotopic if and only if they have the same Brouwer degree.  Together with (ii), this proves the equivalence.

For (iv), we observe that, by \eqref{eq:degree-def} and the Jacobian bound \eqref{eq:jacobianbound},
\[
  |\Sph^n|\, |\deg u|
  =\left|\int_{\Sph^n}\Jac u\,\dd\sigma\right|
  \le\int_{\Sph^n}|\Jac u|\,\dd\sigma
  \le n^{-n/2}\int_{\Sph^n}|\D u|^n\,\dd\sigma.
  \qedhere
\]
\end{proof}

We will often make use of the radial projection map $\Pi$, defined by
\[
  \Pi(z):=\frac{z}{|z|},\qquad z\in\R^{n+1}\setminus\{0\}.
\]
Its derivative is
\begin{equation}\label{eq:radial-derivative}
  \D\Pi(z)[A]
  =\frac1{|z|}\left(I-\frac z{|z|}\otimes\frac z{|z|}\right)A,
\end{equation}
so
\begin{equation}\label{eq:radial-bound}
  |\D\Pi(z)[A]|\le\frac{|A|}{|z|}.
\end{equation}

\begin{lemma}[Uniformly close maps have the same degree]\label[lemma]{lem:close-degree}
Let $u,v\in W^{1,n}(\Sph^n;\Sph^n)$.  If
\[
  \esssup_{\Sph^n}|u-v|<2,
\]
then $u$ and $v$ are homotopic in $W^{1,n}$.  In particular, $\deg u=\deg v$.
\end{lemma}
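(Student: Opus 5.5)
We plan to use the straight-line homotopy followed by radial projection. For $t\in[0,1]$ set
\[
  w_t:=(1-t)u+tv,\qquad H_t:=\Pi(w_t)=\frac{(1-t)u+tv}{|(1-t)u+tv|}.
\]
The first step is a uniform lower bound on $|w_t|$. Let $\delta:=\esssup|u-v|<2$. Since $|u|=|v|=1$ a.e., we have $u\cdot v=1-\tfrac12|u-v|^2\ge 1-\tfrac12\delta^2>-1$ a.e. Then
\[
  |w_t|^2=(1-t)^2+t^2+2t(1-t)\,u\cdot v\ge 1-2t(1-t)(1-u\cdot v)\ge 1-\tfrac14\delta^2=:c^2>0
\]
a.e., because $t(1-t)\le\tfrac14$ and $1-u\cdot v\le \tfrac12\delta^2$. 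So $w_t$ stays a.e. in the region $\{|z|\ge c\}$, where $\Pi$ is smooth with bounded derivative by \eqref{eq:radial-bound}.

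Next we check that $H_t\in W^{1,n}(\Sph^n;\Sph^n)$. Modify $\Pi$ outside $\{|z|\ge c/2\}$ to a smooth map $\tilde\Pi:\R^{n+1}\to\R^{n+1}$ with bounded derivative. Because $w_t\in W^{1,n}\cap L^\infty$, the chain rule gives $H_t=\tilde\Pi\circ w_t\in W^{1,n}$ with $|\D H_t|\le c^{-1}|\D w_t|\le c^{-1}(|\D u|+|\D v|)$, and $|H_t|=1$ a.e.

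The main step is continuity of $t\mapsto H_t$ in the strong $W^{1,n}$ topology. As $s\to t$ we have $w_s\to w_t$ in $W^{1,n}$ and in $L^\infty$, since $\|w_s-w_t\|_\infty\le 2|s-t|$. Write
\[
  \D H_s-\D H_t=\D\tilde\Pi(w_s)(\D w_s-\D w_t)+\bigl(\D\tilde\Pi(w_s)-\D\tilde\Pi(w_t)\bigr)\D w_t .
\]
The first term is bounded by $C\|\D w_s-\D w_t\|_{L^n}\to0$. For the second, $\D\tilde\Pi$ is Lipschitz on bounded sets, so the factor $\D\tilde\Pi(w_s)-\D\tilde\Pi(w_t)$ tends to $0$ uniformly, while $\D w_t\in L^n$. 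Hence the second term also tends to $0$ in $L^n$. The $L^n$ convergence of $H_s$ itself is immediate. Thus $H$ is a continuous path in $W^{1,n}(\Sph^n;\Sph^n)$ from $H_0=u$ to $H_1=v$, so $u$ and $v$ are homotopic in $W^{1,n}$.

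Finally, $\deg u=\deg v$ follows either from \Cref{prop:density-classification}(iii), or directly from (ii): by continuity of the path, $t\mapsto\deg H_t$ is a continuous integer-valued function on $[0,1]$, hence constant. The only delicate point is the lower bound $|w_t|\ge c>0$, which is exactly where the hypothesis $\esssup|u-v|<2$ enters. Everything else is routine chain-rule and continuity bookkeeping.
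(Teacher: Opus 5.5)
Your proposal is correct and follows essentially the same route as the paper: the straight-line interpolation $w_t=(1-t)u+tv$ followed by radial projection, the lower bound $|w_t|^2=1-t(1-t)|u-v|^2\ge 1-\delta^2/4>0$, continuity of $t\mapsto H_t$ in $W^{1,n}$ via the bounds on $\D\Pi$ and $\D^2\Pi$ on $\{|z|\ge c\}$, and then the degree conclusion from \Cref{prop:density-classification}. Your splitting of $\D H_s-\D H_t$ into two terms is just a more explicit version of the paper's Lipschitz estimate $\|\D H_t-\D H_s\|_{L^n}\le C_c|t-s|(\|\D u\|_{L^n}+\|\D v\|_{L^n})$.
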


\begin{proof}
Set
\[
  w_t:=(1-t)u+tv
  \qquad\text{and}\qquad
  H_t:=\Pi(w_t) \,.
\]
Since $|u|=|v|=1$, setting $c:= \esssup_{\Sph^n}|u-v|$ gives
\[
  |w_t|^2
  =1-t(1-t)|u-v|^2
  \ge 1- c^2/4 > 0 \,,
\]
so $H_t$ is well-defined.  On the set $\{|z|^2\ge 1- c^2/4 \}$, both $\D\Pi$ and $\D^2\Pi$ are bounded by constants depending only on $c$.  Therefore, for $s,t\in[0,1]$,
\[
  \|H_t-H_s\|_{L^n}
  \le C_c|t-s|\|u-v\|_{L^n}
\]
and, using the chain rule,
\[
  \|\D H_t-\D H_s\|_{L^n}
  \le C_c|t-s|\bigl(\|\D u\|_{L^n}+\|\D v\|_{L^n}\bigr).
\]
Thus $t\mapsto H_t$ is a continuous path in $W^{1,n}(\Sph^n;\Sph^n)$ from $u$ to $v$. In particular, \Cref{prop:density-classification} gives $\deg u=\deg v$.
\end{proof}

\begin{lemma}[Excision and additivity of degree]\label[lemma]{lem:degree-excision}
Let $u,v\in C(\Sph^n;\Sph^n)$, and suppose that $u=v$ on the complement of open balls $B_1,\dots,B_I$ whose closures are pairwise disjoint.  For each $i$, form a map $U_i:\Sph^n\to\Sph^n$ by taking $u$ on one copy of $\overline B_i$ and $v$ on a second copy of $\overline B_i$ with the opposite orientation.  The two pieces match on the boundary because $u=v$ there.  Put
\[
  q_i:=\deg U_i .
\]
With this orientation convention,
\begin{equation}\label{eq:degree-excision}
  \deg u-\deg v=\sum_{i=1}^I q_i.
\end{equation}
\end{lemma}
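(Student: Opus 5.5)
I will reduce the statement to the case where $u$ and $v$ are smooth, and then prove it with the Jacobian formula \eqref{eq:degree-def}, splitting the integral over the balls and their complement.

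\emph{Reduction to smooth maps.} Degree is a homotopy invariant for continuous maps. I will modify $u$ and $v$ by homotopies supported in the balls, keeping them equal outside $\bigcup_i B_i$, until both are smooth. This does not change $\deg u$, $\deg v$ or any $\deg U_i$. Concretely, I will choose slightly smaller concentric balls $B_i'\Subset B_i$ such that $u=v$ on $\Sph^n\setminus\bigcup B_i$. On the compact set where they agree, a mollification followed by the projection $\Pi$ gives a common smooth approximation of $u$ and of $v$. Since this approximation is uniformly close, I will glue it in using a cutoff and $\Pi$, as in the proof of \Cref{lem:close-degree}; the relevant convex combinations stay away from $0$ because $|u-v'|<2$ uniformly.

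An alternative avoids smoothing entirely. Continuous maps are homotopic to maps in $W^{1,n}$ via uniformly close approximations, by \Cref{lem:close-degree} together with the classical fact that uniformly close continuous maps are homotopic. I will therefore assume $u,v\in C^\infty$ and $u=v$ on an open neighbourhood of $\Sph^n\setminus\bigcup B_i$. I expect the main obstacle to be arranging this agreement in a neighbourhood of the boundary spheres, rather than only on the complement, while keeping $U_i$ well defined and continuous. I will handle it by slightly shrinking the balls as described.

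\emph{Integral splitting.} With $u,v$ smooth, \eqref{eq:degree-def} gives
\[
 |\Sph^n|(\deg u-\deg v)=\sum_{i=1}^I\Bigl(\int_{B_i}\Jac u\,\dd\sigma-\int_{B_i}\Jac v\,\dd\sigma\Bigr),
\]
because the integrands agree off $\bigcup B_i$. The map $U_i$ is defined on the double $\overline B_i\cup_{\partial}\overline B_i$, which is identified with $\Sph^n$ by an orientation-preserving diffeomorphism on the first copy and an orientation-reversing one on the second, as the statement prescribes. It is smooth, or at least piecewise smooth and Lipschitz across the seam, which suffices for $U_i \in W^{1,n}$. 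By change of variables the Jacobian of $U_i$ integrates to $\int_{B_i}\Jac u-\int_{B_i}\Jac v$, since orientation reversal flips the sign. Hence $|\Sph^n|\,q_i$ equals the $i$-th bracket, and summing over $i$ gives \eqref{eq:degree-excision}.

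\emph{Checking the identification.} I still need to verify that degree is invariant under precomposition with an orientation-preserving bi-Lipschitz homeomorphism. In the smooth case this is the change of variables formula. Integrality then follows from \Cref{prop:density-classification}.
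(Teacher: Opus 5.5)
Your proof is correct in substance, but it takes a different route from the paper. The paper computes degrees by signed counts of preimages of a regular value $y$ that is not attained on $\bigcup_i\partial B_i$. Preimages outside the balls are common to $u$ and $v$ and cancel. Those in $B_i$ reproduce $\deg U_i$ because the second copy carries the reversed orientation. You instead split the integral in \eqref{eq:degree-def} and identify each difference $\int_{B_i}\Jac u\,\dd\sigma-\int_{B_i}\Jac v\,\dd\sigma$ with $|\Sph^n|\deg U_i$, by a change of variables on the two halves of the double; the orientation-reversing half supplies the minus sign.

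Your version stays within the paper's analytic definition of degree, needs no choice of regular value, and tolerates a seam where $U_i$ is merely Lipschitz. The price is that you must know the Jacobian degree of a Lipschitz map agrees with its Brouwer degree. This follows by mollifying and projecting, then using \Cref{prop:density-classification}(ii) and homotopy invariance. The counting argument is purely topological, needs no regularity at the seam at all, and makes the sign convention visible preimage by preimage.

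Your reduction to smooth maps has the right ingredients but should be tidied.
\begin{itemize}
\item Homotopies supported in the balls cannot make $u$ smooth off the balls. What you need is one smooth approximation $w$ used for both maps near $K:=\Sph^n\setminus\bigcup_iB_i$, glued by a cutoff to separate approximations of $u$ and $v$ deeper inside the balls.
\item The smaller balls $B_i'$ must be chosen so that $|u-v|$ is small on the collars $B_i\setminus B_i'$. This holds by uniform continuity, since $u=v$ on $\partial B_i$. It is what keeps the convex combinations away from $0$; no global bound $|u-v|<2$ is available, because the maps may be antipodal somewhere inside $B_i$.
\item The degrees of the $U_i$ are then unchanged, because the modified glued maps stay uniformly within distance less than $2$ of $U_i$.
\item \Cref{lem:close-degree} is stated for $W^{1,n}$ maps, so for merely continuous maps you should invoke the classical fact directly.
\item For your Jacobian step, agreement on $K$ together with Lipschitz regularity already suffices. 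Agreement on a neighbourhood of $K$ is therefore not really the obstacle you anticipate.
\end{itemize}
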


\begin{proof}
This is the standard additivity, or excision, property of the Brouwer degree.  To see the sign convention concretely, first assume that the maps are smooth and that the common boundary values are regular enough so that one may compute degrees by signed counting of preimages of a regular value $y$ that is not hit on the boundaries $\partial B_i$.  The preimages of $y$ outside the balls are the same for $u$ and $v$ and therefore cancel in $\deg u-\deg v$.  Inside $B_i$, the remaining signed count is exactly the degree of the glued map $U_i$, because the second copy carries the opposite orientation.  Summing over the balls gives \eqref{eq:degree-excision}.  The general continuous case follows by uniform approximation and homotopy invariance of the degree.
\end{proof}


\section{The sharp bubble upper bound}\label{sec:upper}

Our goal in this section is to prove the upper bound in Theorem \ref{thm:main}, viz.,
$$
\sup_{f\in\E_{d_1}^{(n)}}\inf_{g\in\E_{d_2}^{(n)}}
  \int_{\Sph^n}|\D_\tau f-\D_\tau g|^n\,\dd\sigma
  \leq \kappa_n \, |d_1-d_2| \,.
$$
This will be the content of \Cref{prop:upper} below.

The following proposition is, essentially, \cite[Proposition~4.10]{DetailleVanSchaftingen}. Since we need a slight refinement and since the method is interesting, we include a complete proof. We let $B^n$ denote the open unit ball in $\R^n$.

\begin{Proposition}[Energy of a degree bubble]\label[Proposition]{prop:bubble-energy}
Let $q\in\Z$ and $p\in\Sph^n$.  Among maps $b\in W^{1,n}(B^n;\Sph^n)$ whose trace is the constant $p$ and whose relative degree is $q$,
\begin{equation}\label{eq:bubble-energy}
  \inf_b\int_{B^n}|\D b|^n\,\dd x
  =\kappa_n|q|.
\end{equation}
The same infimum is obtained if $b$ is required to be smooth and constant in a neighborhood of $\partial B^n$.
\end{Proposition}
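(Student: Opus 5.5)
We prove the two inequalities in \eqref{eq:bubble-energy} separately. For the lower bound, let $b\in W^{1,n}(B^n;\Sph^n)$ have trace $p$ and relative degree $q$. Extend $b$ by the constant $p$ to all of $\R^n$, and transport it to $\Sph^n$ by inverse stereographic projection $S:\R^n\to\Sph^n\setminus\{N\}$. This gives a map $u=b\circ S^{-1}$, constant $p$ outside a spherical cap. Since $W^{1,n}$ traces of $b$ and of the constant agree, $u\in W^{1,n}(\Sph^n;\Sph^n)$. The relative degree is by definition the degree of this glued map, so $\deg u=q$. Conformal invariance of the $n$-Dirichlet energy in dimension $n$ holds because $S$ is conformal and $|\D(b\circ S^{-1})|^n\dd\sigma=|\D b|^n\dd x$ pointwise. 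Hence $\int_{B^n}|\D b|^n\dd x=\int_{\Sph^n}|\D u|^n\dd\sigma\ge\kappa_n|q|$ by \eqref{eq:degree-energy}. The same argument covers the restricted class, so both infima are $\ge\kappa_n|q|$.

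For the upper bound it suffices to build, for each $\varepsilon>0$, a smooth $b$ that is constant $p$ near $\partial B^n$, has relative degree $q$, and has energy $\le\kappa_n|q|+\varepsilon$.

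First take $q=\pm1$. The map $u_0=\mathrm{id}$ (or a reflection, for $q=-1$) has $|\D u_0|^2=n$, so its energy is exactly $n^{n/2}|\Sph^n|=\kappa_n$. Compose with a rotation so that $u_0(N)=p$ at the north pole $N$. Pulled back to $\R^n$ by $S$, this becomes a map $\R^n\to\Sph^n$ tending to $p$ at infinity with energy $\kappa_n$. Rescale it by dilation $x\mapsto x/\lambda$, which is energy invariant. Then modify it in an annulus $\{R<|x|<R^2\}$ using the logarithmic cutoff: there the map is $\delta$-close to $p$, and we interpolate to $p$ in geodesic normal coordinates around $p$. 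The capacity of the annulus in $W^{1,n}$ is $\sim(\log R)^{1-n}\to0$, so the added energy tends to zero. After a final dilation the support fits in $B^n$. Smoothness is automatic from the construction.

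For general $q$, place $|q|$ such bubbles of sign $\operatorname{sgn}q$ in disjoint small balls inside $B^n$, with the map equal to $p$ elsewhere. The energies add to at most $|q|(\kappa_n+\varepsilon)$, and the degrees add by \Cref{lem:degree-excision}. For $q=0$ the constant map works.

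The main obstacle is the cutoff step. We must check that the logarithmic interpolation really costs $o(1)$ energy while staying on $\Sph^n$; this requires working in a chart and applying $\Pi$ or the exponential map with bounded derivative near $p$. We must also check that the truncated tail of the bubble outside radius $R$ carries vanishing energy. Both follow from the decay $|\D(u_0\circ S)(x)|\lesssim|x|^{-2}$ at infinity, which gives energy $\lesssim R^{-n}$ outside $B_R$ and oscillation $\lesssim R^{-1}$ there.
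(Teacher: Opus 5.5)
Your proposal is correct and follows essentially the same route as the paper. The lower bound goes by stereographic compactification, conformal invariance and \eqref{eq:degree-energy}. The upper bound truncates the stereographic bubble near infinity, rescales it into $B^n$, and glues $|q|$ copies via \Cref{lem:degree-excision}.

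The differences are cosmetic. The paper uses a linear cutoff on $B_{2R}\setminus B_R$, where the $|x|^{-1}$ decay of $\Phi-p$ already makes the cost $O(R^{-n})$; so your logarithmic annulus is unnecessary, and it would have to be smoothed to keep $b$ smooth. The paper also checks that truncation preserves the degree, using the homotopy $\Pi\bigl(p+((1-t)+t\chi)(\Phi-p)\bigr)$. You omit this check, but it follows at once from \Cref{lem:close-degree}, since the truncated and untruncated maps differ only where both are close to $p$.
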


Here and below the relative degree is understood as follows.  If $b\in W^{1,n}(B^n;\Sph^n)$ has trace equal to $p$, extend it by the constant value $p$ to $\R^n\setminus B^n$.  This gives a map $\widetilde b\in W^{1,n}(\R^n;\Sph^n)$ with finite $n$-energy.  Compactifying $\R^n$ by adding the point at infinity, equivalently precomposing $\widetilde b$ with an orientation-preserving stereographic chart from $\Sph^n\setminus\{N\}$ onto $\R^n$ and setting the value at $N$ equal to $p$, gives a map $\widehat b\in W^{1,n}(\Sph^n;\Sph^n)$.  We define the relative degree of $b$ to be $\deg \widehat b$.  This is independent of the chosen orientation-preserving chart.

\begin{proof}
Let $b$ be admissible and let $\widehat b$ be its compactification as above. Since stereographic compactification is conformal and the $n$-energy is conformally invariant,
\[
  \int_{\Sph^n}|\D \widehat b|^n\,\dd\sigma
  =\int_{\R^n}|\D \widetilde b|^n\,\dd x
  =\int_{B^n}|\D b|^n\,\dd x .
\]
By \Cref{prop:density-classification}(iv) the left side is bounded from below by $\kappa_n|\deg\widehat b|=\kappa_n|\deg b| = \kappa_n |q|$, so we obtain that for any admissible $b$ we have
\[
  \int_{B^n}|\D b|^n\,\dd x\ge \kappa_n|q|.
\]

For the reverse inequality, let $\Phi:\R^n\to\Sph^n\setminus\{p\}$ be inverse stereographic projection, chosen so that its compactification has degree $1$ and so that $\Phi(x)\to p$ as $|x|\to\infty$.  It is conformal, it has $n$ non-zero singular values and they are all equal to $2/(1+|x|^2)$. Hence
\begin{equation}\label{eq:stereographic-energy}
  |\D\Phi(x)|^n
  =n^{n/2}\left(\frac{2}{1+|x|^2}\right)^n,
  \qquad
  \int_{\R^n}|\D\Phi|^n\,\dd x=\kappa_n.
\end{equation}
Choose $\chi\in C^\infty([0,\infty);[0,1])$ with $\chi=1$ on $[0,1]$ and $\chi=0$ on $[2,\infty)$. Since $|\Phi(x)-p|=O(|x|^{-1})$ and $|\D\Phi(x)|=O(|x|^{-2})$, we may choose $R$ so large that $|\Phi(x)-p|\le1/2$ for $|x|\ge R$.  For $x\in B_{2R}\setminus B_R$, the vector $p+\chi(|x|/R)(\Phi(x)-p)$ has norm at least $1/2$. Thus, we can set
\[
  \Phi_R(x):=\Pi\!\left(p+\chi(|x|/R)(\Phi(x)-p)\right)
  \qquad\text{for all}\ x\in\R^n \,.
\]
Then $\Phi_R=\Phi$ on $B_R$, $\Phi_R=p$ outside $B_{2R}$ and, on $B_{2R}\setminus B_R$,
\[
  |\D\Phi_R(x)|\le CR^{-2} \,.
\]
Consequently,
\[
  \int_{B_{2R}\setminus B_R}|\D\Phi_R|^n\,\dd x
  \le CR^nR^{-2n}=CR^{-n}\to0 \,.
\]
Together with \eqref{eq:stereographic-energy}, this gives
\[
  \int_{\R^n}|\D\Phi_R|^n\,\dd x\to\kappa_n.
\]
Moreover, the homotopy
\[
  \Pi\!\left(p+((1-t)+t\chi(|x|/R))(\Phi(x)-p)\right),
  \qquad0\le t\le1,
\]
is well-defined for large $R$, is equal to $p$ near infinity, and connects $\Phi$ to $\Phi_R$ after compactification.  Hence the compactification of $\Phi_R$ has degree $1$.

Since $\Phi_R=p$ on $\R^n\setminus B_{2R}$, the restriction of $\Phi_R$ to $B_{3R}$ is constant near $\partial B_{3R}$.  After rescaling $B_{3R}$ to $B^n$, conformal invariance under dilations gives a smooth degree-one map on $B^n$, constant near $\partial B^n$, whose energy tends to $\kappa_n$.  Composing with an orientation-reversing target isometry that fixes $p$ gives degree $-1$ with the same boundary value and the same energy.

For $q\ne0$, choose $|q|$ pairwise disjoint balls compactly contained in $B^n$, place in each a rescaled copy of the degree-$\operatorname{sgn}q$ construction, and set the map equal to $p$ elsewhere.  The map is smooth and constant near $\partial B^n$; by \Cref{lem:degree-excision}, its relative degree is $q$, and its energy is the sum of the energies of the copies.  Letting the error of each copy tend to zero gives the upper bound $\kappa_n|q|$.  The case $q=0$ is given by the constant map.
\end{proof}

The following approximation result complements \Cref{prop:density-classification}(i) and is a special case of \cite[Lemma 2.2]{BrezisMironescuShafrir2016}. We include the simple proof to make our paper selfcontained.

\begin{lemma}[Local flattening]\label[lemma]{lem:flattening}
Let $f\in C^\infty(\Sph^n;\Sph^n)$ and $a\in\Sph^n$.  For all sufficiently small $r>0$, there is $\widetilde f_r\in C^\infty(\Sph^n;\Sph^n)$ such that
\begin{enumerate}[label=\textup{(\roman*)}]
\item $\widetilde f_r=f(a)$ on $B_r(a)$;
\item $\widetilde f_r=f$ on $\Sph^n\setminus B_{2r}(a)$;
\item $\deg\widetilde f_r=\deg f$;
\item $\|\D(\widetilde f_r-f)\|_{L^n}\to0$ as $r\downarrow0$.
\end{enumerate}
\end{lemma}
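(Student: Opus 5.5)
The plan is to interpolate, through a cutoff, between $f$ and its value $f(a)$, and then project back radially onto the sphere, exactly as in the construction of $\Phi_R$ in \Cref{prop:bubble-energy}. Fix a smooth cutoff $\eta\in C^\infty([0,\infty);[0,1])$ with $\eta=0$ on $[0,1]$ and $\eta=1$ on $[2,\infty)$. Let $\rho(x)$ denote geodesic distance from $a$, and put
\[
  w_r(x):=f(a)+\eta(\rho(x)/r)\bigl(f(x)-f(a)\bigr),\qquad \widetilde f_r:=\Pi(w_r).
\]
Since $f$ is smooth, it is Lipschitz with some constant $L$. On $B_{2r}(a)$ we therefore have $|f(x)-f(a)|\le 2Lr$, so $|w_r|\ge 1-2Lr\ge 1/2$ once $r\le 1/(4L)$. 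Hence $\widetilde f_r$ is well defined and smooth for small $r$. Properties (i) and (ii) are then immediate: $w_r=f(a)$ on $B_r(a)$, and $w_r=f$ (which already has unit length) outside $B_{2r}(a)$.

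For (iii), I note that $|\widetilde f_r-f|$ vanishes off $B_{2r}(a)$. On $B_{2r}(a)$ both $\widetilde f_r$ and $f$ lie within $O(r)$ of $f(a)$: indeed $|w_r-f(a)|\le 2Lr$, and $\Pi$ is Lipschitz on $\{|z|\ge1/2\}$. So $\sup|\widetilde f_r-f|=O(r)<2$ for small $r$, and \Cref{lem:close-degree} gives $\deg\widetilde f_r=\deg f$.

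For (iv), $\D(\widetilde f_r-f)$ is supported in $B_{2r}(a)$. There, by the chain rule and \eqref{eq:radial-bound},
\[
  |\D\widetilde f_r|\le 2|\D w_r|\le 2\Bigl(\frac{\|\eta'\|_\infty}{r}\,|f-f(a)|+|\D f|\Bigr)\le 2\bigl(2L\|\eta'\|_\infty+L\bigr),
\]
which is a bound independent of $r$. Thus $|\D(\widetilde f_r-f)|\le C$ on $B_{2r}(a)$, and so
\[
  \|\D(\widetilde f_r-f)\|_{L^n}^n\le C^n\,\sigma(B_{2r}(a))\le C' r^n\to0 .
\]

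The only point that needs care is the bound on the cutoff term. The factor $1/r$ coming from $\eta'$ is exactly compensated by $|f-f(a)|=O(r)$, and this compensation is where smoothness (Lipschitz continuity) of $f$ is used. Everything else is routine.
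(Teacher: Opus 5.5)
Your proof is correct and follows the paper's construction: you interpolate toward $f(a)$ with a cutoff, project radially with $\Pi$, and for (iv) use that the $1/r$ from the cutoff is compensated by $|f-f(a)|=O(r)$. The only deviation is in (iii), where you get the degree from \Cref{lem:close-degree} via the uniform $O(r)$ closeness. The paper instead writes down the explicit homotopy $\Pi\bigl(p+((1-t)+t\eta_r)(f-p)\bigr)$, and both arguments are valid.
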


\begin{proof}
Choose a smooth cutoff $\eta_r$ with
\[
  \begin{gathered}
    0\le\eta_r\le1,\qquad \eta_r=0\text{ on }B_r(a),\\
    \eta_r=1\text{ outside }B_{2r}(a),\qquad |\D\eta_r|\le C/r.
  \end{gathered}
\]
Set $p:=f(a)$ and $z_r:=p+\eta_r(f-p)$.  Since $f$ is smooth, $|f(x)-p|\le Cr$ on $B_{2r}(a)$; hence $|z_r|\ge1-Cr\geq 1/2$ for all sufficiently small $r>0$.  Define $\widetilde f_r:=\Pi(z_r)$.  Properties (i) and (ii) are immediate.

For (iii), set
\[
  H_t(x):=\Pi\bigl(p+((1-t)+t\eta_r(x))(f(x)-p)\bigr),
  \qquad0\le t\le1.
\]
The vector inside $\Pi$ has norm at least $1-Cr\geq 1/2$, so $H$ is a smooth homotopy with $H_0=f$, $H_1=\widetilde f_r$, and $H_t=f$ outside $B_{2r}(a)$.  Therefore, either by classical theory or by \Cref{prop:density-classification}, the degree is unchanged.

On $B_{2r}(a)$,
\[
  \D z_r
  =\eta_r\D f+(f-p)\otimes\D\eta_r,
\]
and therefore
\[
  |\D z_r|
  \le|\D f|+|f-p||\D\eta_r|
  \le C,
\]
with $C$ independent of $r$.  By \eqref{eq:radial-bound}, $|\D\widetilde f_r|\le2C$ there.  Hence
\[
  |\D(\widetilde f_r-f)|
  \le|\D\widetilde f_r|+|\D f|
  \le C
  \quad\text{on }B_{2r}(a).
\]
Outside $B_{2r}(a)$ the two maps agree, so
\[
  \int_{\Sph^n}|\D(\widetilde f_r-f)|^n\,\dd\sigma
  \le C|B_{2r}(a)|\to0.
  \qedhere
\]
\end{proof}

\begin{Proposition}[Sharp upper bound]\label[Proposition]{prop:upper}
For every $n\ge2$, $d_1,d_2\in\Z$, and $f\in\E_{d_1}^{(n)}$,
\begin{equation}\label{eq:upper-pointwise}
  \inf_{g\in\E_{d_2}^{(n)}}
  \int_{\Sph^n}|\D f-\D g|^n\,\dd\sigma
  \le\kappa_n \, |d_1-d_2|.
\end{equation}
\end{Proposition}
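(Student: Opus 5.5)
The strategy is to reduce to smooth $f$ by density, then flatten $f$ near a point and insert $|d_2-d_1|$ small bubbles there. Fix $\varepsilon>0$.

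\emph{Reduction to smooth $f$.} By \Cref{prop:density-classification}(i),(ii) there is $f_\varepsilon\in C^\infty(\Sph^n;\Sph^n)$ with $\|\D(f-f_\varepsilon)\|_{L^n}<\varepsilon$ and $\deg f_\varepsilon=d_1$. Minkowski's inequality gives $\|\D f-\D g\|_{L^n}\le \varepsilon+\|\D f_\varepsilon-\D g\|_{L^n}$ for every $g$. It therefore suffices to find, for each smooth $f$ of degree $d_1$, a map $g\in\E_{d_2}^{(n)}$ with $\int|\D f-\D g|^n\le \kappa_n|d_1-d_2|+\varepsilon$. We then let $\varepsilon\to0$.

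\emph{Flattening.} Fix $a\in\Sph^n$ and apply \Cref{lem:flattening}. For small $r$ we get $\widetilde f_r$, equal to $p:=f(a)$ on $B_r(a)$, of degree $d_1$, with $\|\D(\widetilde f_r-f)\|_{L^n}<\varepsilon$.

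\emph{Bubble insertion.} Set $q:=d_2-d_1$. \Cref{prop:bubble-energy} gives a smooth $b:B^n\to\Sph^n$, constant $p$ near $\partial B^n$, with relative degree $q$ and energy at most $\kappa_n|q|+\varepsilon$. Transplant $b$ into a small geodesic ball $B_\rho(a)\subset B_r(a)$ by a conformal chart, for instance a dilation composed with stereographic projection centered at $a$. Conformal invariance of the $n$-energy keeps the energy unchanged, and orientation preservation keeps the relative degree $q$. Let $g$ equal $\widetilde f_r$ outside $B_\rho(a)$ and the transplanted bubble inside. Then $g$ is smooth. By \Cref{lem:degree-excision}, $\deg g-\deg\widetilde f_r$ equals the degree of the map glued from the bubble and the constant $p$ with reversed orientation, which is $q$. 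Hence $\deg g=d_2$.

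\emph{Energy estimate.} On $B_\rho(a)$ we have $\D\widetilde f_r=0$, so $\D\widetilde f_r-\D g=-\D g$ there. Outside $B_\rho(a)$ the difference vanishes. Therefore
$$\int|\D\widetilde f_r-\D g|^n=\int_{B_\rho(a)}|\D g|^n\le\kappa_n|q|+\varepsilon.$$
Combining this with Minkowski and the flattening error gives
$$\|\D f-\D g\|_{L^n}\le\varepsilon+(\kappa_n|q|+\varepsilon)^{1/n},$$
which yields the claim as $\varepsilon\to0$.

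\emph{Main obstacle.} The step needing the most care is the orientation and degree bookkeeping in the gluing. We must confirm that the transplanted bubble has relative degree exactly $q$, with the sign convention of \Cref{lem:degree-excision} matching that of \Cref{prop:bubble-energy}. This comes down to using an orientation-preserving chart, in which case the glued map $U_1$ coincides with the compactification $\widehat b$ up to an orientation-preserving reparametrization.
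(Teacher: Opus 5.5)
Your proposal is correct and follows essentially the same route as the paper. You flatten $f$ near $a$ with the local flattening lemma, transplant a near-optimal bubble of relative degree $d_2-d_1$ into the flat region by an orientation-preserving conformal map, read off the degree by excision, and conclude with the $L^n$ triangle inequality plus density for nonsmooth $f$. The only differences are cosmetic: you do the density reduction first rather than last, and you place the bubble in a smaller ball $B_\rho(a)\subset B_r(a)$ instead of the whole flat ball $B_r(a)$.
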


\begin{proof}
Assume first that $f$ is smooth.  Fix $a\in\Sph^n$ and $p:=f(a)$. By \Cref{lem:flattening}, for all sufficiently small $r>0$ there is $\widetilde f_r\in C^\infty(\Sph^n;\Sph^n)$ with $\deg\widetilde f_r = d_1$, with $\widetilde f_r = p$ on $B_r(a)$ and $\widetilde f_r = f$ on $\Sph^n\setminus B_{2r}(a)$ and such that $\|\D(f-\widetilde f_r)\|_{L^n}\to0$.  Choose an orientation-preserving conformal diffeomorphism $\psi:B_r(a)\to B^n$.  

Fix $\epsilon>0$. Conformal invariance of the $n$-energy and \Cref{prop:bubble-energy} imply that there is
\[
  b_0\in C^\infty(\overline{B^n};\Sph^n)
\]
that is constant equal to $p$ near $\partial B^n$, has relative degree $d_2-d_1$, and satisfies
\[
  \int_{B^n}|\D b_0|^n\,\dd x
  \le\kappa_n \, |d_1-d_2|+\varepsilon.
\]
Set $b:=b_0\circ\psi$.  Then $b$ is smooth, is constant equal to $p$ near $\partial B_r(a)$, has relative degree $d_2-d_1$, and
\[
  \int_{B_r(a)}|\D b|^n\,\dd\sigma
  \le\kappa_n \, |d_1-d_2|+\varepsilon.
\]
Define $G:=b$ on $B_r(a)$ and $G:=\widetilde f_r$ outside.  This piecewise map is continuous, and the one-ball case of \Cref{lem:degree-excision} gives $\deg G=\deg\widetilde f_r+d_2-d_1=d_2$.  Moreover,
\[
  \|\D(\widetilde f_r-G)\|_{L^n}^n
  =\int_{B_r(a)}|\D b|^n\,\dd\sigma.
\]
The $L^n$ triangle inequality yields
\[
  \|\D(f-G)\|_{L^n}
  \le\|\D(f-\widetilde f_r)\|_{L^n}
    +(\kappa_n \, |d_1-d_2|+\varepsilon)^{1/n}.
\]
It follows that
$$
\inf_{g\in\E_{d_2}^{(n)}}
  \int_{\Sph^n}|\D f-\D g|^n\,\dd\sigma
  \le \left( \|\D(f-\widetilde f_r)\|_{L^n} + (\kappa_n \, |d_1-d_2| + \varepsilon)^{1/n}\right)^n.
$$
Letting $r\downarrow0$ and then $\varepsilon\downarrow0$, we obtain the claimed inequality \eqref{eq:upper-pointwise} for smooth $f$.

For general $f\in\E_{d_1}^{(n)}$, choose smooth $f_j$ with $f_j\to f$ strongly in $W^{1,n}$. In particular, for all large $j$, $\deg f_j=d_1$.  For such $j$, choose $g_j\in\E_{d_2}^{(n)}$ such that
\[
  \|\D(f_j-g_j)\|_{L^n}
  \le(\kappa_n \, |d_1-d_2|+1/j)^{1/n}.
\]
Then
\[
  \inf_{g\in\E_{d_2}^{(n)}}\|\D(f-g)\|_{L^n}
  \le\|\D(f-f_j)\|_{L^n}+(\kappa_n \, |d_1-d_2|+1/j)^{1/n}.
\]
Letting $j\to\infty$ proves \eqref{eq:upper-pointwise}.
\end{proof}


\section{Atomic bubbling and the topological defect}\label{sec:bubbling}

An important ingredient in our proof of the lower bound is the following specialization of the bubbling theorem of Detaille and Van Schaftingen \cite{DetailleVanSchaftingen}.

\begin{Proposition}[Sphere-valued form of the DVS bubbling theorem]\label[Proposition]{prop:atomic-defect}
Let $u_j\in W^{1,n}(\Sph^n;\Sph^n)$ satisfy
\[
  \deg u_j=d_2,
  \qquad u_j\to u\text{ in }L^1(\Sph^n),
  \qquad u\in C^\infty(\Sph^n;\Sph^n),\quad\deg u=d_1.
\]
Suppose
\[
  |\D u_j|^n\,\dd\sigma\weakstar\mu
\]
as finite Radon measures.  Then there are finitely many points $a_1,\dots,a_I\in\Sph^n$ such that
\begin{equation}\label{eq:atomic-defect}
  \sum_{i=1}^I\mu(\{a_i\})
  \ge\kappa_n \, |d_1-d_2|.
\end{equation}
\end{Proposition}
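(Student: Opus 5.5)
The plan is to reduce to a direct concentration-compactness argument that avoids the full DVS machinery; where the direct route stalls, we can fall back on citing \cite[Theorem~5.1]{DetailleVanSchaftingen}. Since $\mu$ is a finite Radon measure, it has at most countably many atoms. We first show the defect is carried by finitely many of them.

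\textbf{Step 1 (finitely many heavy atoms).} Fix a small threshold $\varepsilon_0>0$, to be specified in Step 2. Let $a_1,\dots,a_I$ be the points with $\mu(\{a_i\})\ge\varepsilon_0$. There are finitely many because $\mu(\Sph^n)<\infty$. Since $u$ is smooth, hence uniformly continuous, every point $x\notin\{a_i\}$ has a small geodesic ball $B_\rho(x)$ with $\mu(B_{2\rho}(x))<\varepsilon_0$ and with $\operatorname{osc}_{B_{2\rho}(x)}u$ small.

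\textbf{Step 2 (no degree change away from atoms).} On such a ball, the small-energy and $L^1$-convergence conditions should give uniform closeness of $u_j$ to $u$ on most of the annulus $B_{2\rho}\setminus B_\rho$. The tool is a Fubini/Courant--Lebesgue argument. Choose radii $r\in(\rho,2\rho)$ for which $\int_{\partial B_r}|\D u_j|^n$ is controlled by $\varepsilon_0/\rho$ and $\int_{\partial B_r}|u_j-u|\to0$. Morrey's embedding on the $(n-1)$-dimensional sphere $\partial B_r$, with $n>n-1$, then gives uniform convergence $u_j\to u$ on $\partial B_r$ along a subsequence.

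Next, patch. We modify $u_j$ inside each good ball $B_r$ and replace it near the heavy atoms as well. The aim is to build comparison maps $v_j$ which agree with $u_j$ near the atoms and with (an interpolation to) $u$ elsewhere. This uses \Cref{lem:close-degree} together with the excision identity \Cref{lem:degree-excision}, covering $\Sph^n\setminus\bigcup_i B_\delta(a_i)$ by finitely many good balls. The output is
\[
d_2-d_1=\sum_{i=1}^I q_{i,j},
\]
where $q_{i,j}$ is the degree of $u_j$ on $B_\delta(a_i)$ glued against the boundary values of $u$.

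\textbf{Step 3 (energy per atom).} For each $i$, the glued map from Step 2 has, by \Cref{prop:density-classification}(iv) and conformal invariance, energy at least $\kappa_n|q_{i,j}|$. This energy equals $\int_{B_\delta(a_i)}|\D u_j|^n$ plus the energy of the reflected smooth piece of $u$, which is $O(\delta^n)$, plus interpolation errors that vanish with the shell energy. Letting $j\to\infty$ and then $\delta\to0$ gives $\mu(\{a_i\})\ge\kappa_n\limsup_j|q_{i,j}|$. Passing to a subsequence on which each $q_{i,j}=q_i$ is constant, the triangle inequality yields $\sum_i\mu(\{a_i\})\ge\kappa_n|d_1-d_2|$.

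\textbf{Main obstacle.} The delicate part is the patching on the annuli. We need to select the radii uniformly over $j$ (after a subsequence) so that both the boundary oscillation and the interpolation energy are small. We also need to choose $\varepsilon_0$ below the threshold where small energy on a shell forces near-constancy of $u_j$ up to the small oscillation of $u$. If this becomes too technical, we instead note that the statement is exactly the sphere-target case of \cite[Theorem~5.1]{DetailleVanSchaftingen}, where the heterotopic defect $\kappa_n|d_1-d_2|$ is shown to sit in finitely many atoms of $\mu$, and simply verify that its hypotheses hold with $u$ smooth.
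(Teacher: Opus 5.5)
Your fallback is the paper's route, but your primary, direct route has a genuine gap at the step that carries the content: the global patching in Step~2 that is supposed to give $\deg v_j=\deg u_j$. The tools you name cannot do this.

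\Cref{lem:degree-excision} needs balls with pairwise disjoint closures, and continuous maps, whereas $u_j$ is only $W^{1,n}$. A cover of $\Sph^n\setminus\bigcup_i B_\delta(a_i)$ by good balls necessarily overlaps. \Cref{lem:close-degree} needs $\esssup|u_j-v_j|<2$, and this fails. Small $W^{1,n}$ energy on a ball does not keep $u_j$ uniformly near $u$ inside it; for example, a $\log\log$-type excursion along a great circle reaches $-u$ at arbitrarily small energy cost. So closeness is available only on selected spheres.

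Making the covering work would need a sequential replacement scheme: re-select good spheres for every intermediate map and control all collar errors. None of this is supplied, and you flag it yourself as the obstacle. Also, Morrey alone gives only a H\"older bound of size $\sim\varepsilon_0^{1/n}$ on $\partial B_r$, not smallness. Uniform convergence needs a fixed radius, chosen via Fatou from $\int\liminf_j\int_{\partial B_r}|\D u_j|^n\,\dd r\le\liminf_j\int|\D u_j|^n$, together with Arzel\`a--Ascoli along an $r$-dependent subsequence.

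You can avoid global patching entirely by working with Jacobian measures. After a subsequence, $\Jac u_j\,\dd\sigma\weakstar\lambda$. By \eqref{eq:jacobianbound}, $|\lambda|\le n^{-n/2}\mu$, and testing with $1$ gives $\lambda(\Sph^n)=|\Sph^n|\,d_2$.

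Now take $x$ with $\mu(\{x\})<\kappa_n$. Your one-ball argument uses one good sphere $\partial B_r(x)$, a thin interpolation collar, gluing against $u$, and \Cref{prop:density-classification}(iv). For small $r$ it produces a glued map of energy $<\kappa_n$, hence of degree $0$. Therefore $\lambda(B_r(x))=\int_{B_r(x)}\Jac u\,\dd\sigma$ for a.e.\ small $r$, and hence for all small $r$ by left-continuity in $r$.

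A covering argument then shows that $\lambda-\Jac u\,\dd\sigma$ is carried by the finite set $\{x:\mu(\{x\})\ge\kappa_n\}=\{a_1,\dots,a_I\}$. Since $\Jac u\,\dd\sigma$ has no atoms, $|\Sph^n|\,|d_2-d_1|=\bigl|\sum_i\lambda(\{a_i\})\bigr|\le n^{-n/2}\sum_i\mu(\{a_i\})$, which is the claim. This would be a self-contained, more elementary alternative to the paper's proof, which relies entirely on \cite[Theorem~5.1]{DetailleVanSchaftingen}.

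If you keep the fallback, note that that theorem does not literally state the atom bound. You must first check its common $\VMO$ homotopy-class hypothesis, via \Cref{prop:density-classification}(iii) and $W^{1,n}\hookrightarrow\VMO$. Then you must convert its conclusion $\mu\ge|\D u|^n\,\dd\sigma+\sum_i\mathfrak E^{1,n}_{\mathrm{top}}([w,v,B_r(a_i)])\,\delta_{a_i}$ using three ingredients:
\cite[Proposition~4.10]{DetailleVanSchaftingen}, which evaluates each term as $\kappa_n|q_i|$;
\Cref{lem:degree-excision}, which gives $\sum_i q_i=d_1-d_2$;
and the triangle inequality.
That conversion is exactly the paper's proof.
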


\begin{proof}
Choose a fixed smooth map $v:\Sph^n\to\Sph^n$ with $\deg v=d_2$. Since $\deg u_j=\deg v=d_2$, Proposition~\ref{prop:density-classification}
gives a $W^{1,n}$-homotopy between $u_j$ and $v$. Since
$W^{1,n}(\Sph^n)\hookrightarrow \VMO(\Sph^n)$ continuously, this is also
a $\VMO$-homotopy. Therefore the homotopy-class hypothesis in
\cite[Theorem~5.1]{DetailleVanSchaftingen} is satisfied. Apply \cite[Theorem~5.1]{DetailleVanSchaftingen} with
\[
  \mathcal M=\mathcal N=\Sph^n,\qquad m=n,
  \qquad v_j=u_j.
\]
The hypotheses are exactly the $L^1$ convergence above, the common $\VMO$ homotopy class of the sequence, and the assumed weak-star convergence of the energy measures.  The theorem yields finitely many points $a_1,\dots,a_I$ such that, for every sufficiently small common radius $r>0$, there is a continuous map $w:\Sph^n\to\Sph^n$ satisfying
\begin{enumerate}[label=\textup{(\roman*)}]
\item $w$ is homotopic to $u$, and hence $\deg w=d_1$;
\item $w=v$ on $\Sph^n\setminus\bigcup_{i=1}^I B_r(a_i)$;
\item
\begin{equation}\label{eq:dvs-measure}
  \mu\ge|\D u|^n\,\dd\sigma
  +\sum_{i=1}^I
  \mathfrak E_{\mathrm{top}}^{1,n}([w,v,B_r(a_i)])\,\delta_{a_i}.
\end{equation}
\end{enumerate}
Here $[w,v,B_r(a_i)]\in\pi_n(\Sph^n)$ is the local disparity in the convention of \cite[Section~4.1]{DetailleVanSchaftingen}, that is, the homotopy class of maps from $\Sph^n$ to $\Sph^n$ that are homotopic
to a map given by $w|_{B_r(a_i)}$ on the northern hemisphere and by $v|_{B_r(a_i)}$ on the southern one. We do not need to recall the definition of the energy functional $\mathfrak E_{\mathrm{top}}^{1,n}$, because the equality \eqref{eq:local-topological-energy} below is all we need.

The balls can be chosen with pairwise disjoint closures. Let
\[
  q_i:=\deg [w,v,B_r(a_i)]\in\Z.
\]
Since $w=v$ outside the union of the balls, \Cref{lem:degree-excision} gives
\begin{equation}\label{eq:local-degree-sum}
  \sum_{i=1}^I q_i=\deg w-\deg v=d_1-d_2.
\end{equation}
By \cite[Proposition~4.10]{DetailleVanSchaftingen} (which is essentially \Cref{prop:bubble-energy}), the topological energy of a sphere-valued disparity is
\begin{equation}\label{eq:local-topological-energy}
  \mathfrak E_{\mathrm{top}}^{1,n}([w,v,B_r(a_i)])
  =\kappa_n|q_i|.
\end{equation}
Taking the mass of \eqref{eq:dvs-measure} at the points and using \eqref{eq:local-degree-sum},
\[
  \sum_{i=1}^I\mu(\{a_i\})
  \ge\kappa_n\sum_{i=1}^I|q_i|
  \ge\kappa_n\left|\sum_{i=1}^I q_i\right|
  =\kappa_n \, |d_1-d_2|.
  \qedhere
\]
\end{proof}


\section{The oscillatory pinning construction}\label{sec:pinning-construction}

Our goal in this section is to construct a sequence of maps of a given degree that are hard to approximate in the sense of Proposition \ref{prop:pinning}. The construction is analogous in spirit to the one-dimensional zig-zag constructions in \cite{BrezisMironescuShafrir2016,BrezisMironescuShafrir2018} and \cite{Shafrir2018}, but the higher-dimensional mechanism is different: we rapidly alternate among finitely many small target rotations whose orbits affinely span the ambient space. For this reason we will refer to the mechanism as oscillatory pinning.


\subsection{A uniform affine-spanning family of small rotations}\label{sec:rotations}

Set $m:=n+1$ and let $\so(m)$ denote the set of all traceless, skew-symmetric, real $m\times m$-matrices. Let $(e_a)_{a=1}^m$ be the standard basis of $\R^m$. For $1\le a<b\le m$, define $A_{ab}\in\so(m)$ by
\begin{equation}\label{eq:Aab}
  A_{ab}e_a :=e_b,\qquad A_{ab}e_b:=-e_a,
\end{equation}
and
$$
A_{ab}e_c=0
\quad\text{if}\ c\notin\{a,b\} \,.
$$
Fix $\theta\in(0,\pi)$ and put
\begin{equation}\label{eq:rotation-family}
  Q_0:=I,\qquad Q_{ab}^{\pm}:=\exp(\pm\theta A_{ab}),
\end{equation}
with
\[
  \mathcal Q_\theta
  :=\{Q_0\}\cup\{Q_{ab}^{+},Q_{ab}^{-}:1\le a<b\le m\}.
\]
We emphasize that the matrices in $\mathcal Q_\theta$ are orthogonal. It will be important later that the angle $\theta$ may be arbitrarily small, so the whole family may be chosen uniformly close to the identity.

\begin{lemma}[Uniform affine spanning]\label[lemma]{lem:affine-spanning}
For every $p\in\Sph^n$, the set $\{Qp:Q\in\mathcal Q_\theta\}$ affinely spans $\R^{n+1}$.
\end{lemma}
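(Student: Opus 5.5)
To show that $\{Qp: Q\in\mathcal Q_\theta\}$ affinely spans $\R^{n+1}$, it suffices to show that the difference vectors $Qp-p$, $Q\in\mathcal Q_\theta$, linearly span $\R^{n+1}$. Equivalently, the only $v\in\R^{n+1}$ with $v\cdot(Qp-p)=0$ for all $Q\in\mathcal Q_\theta$ is $v=0$. The plan is to compute these differences explicitly, since each $Q_{ab}^\pm$ acts only in the $(e_a,e_b)$-plane.

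Write $p=\sum_c p_c e_c$. Since $\exp(\pm\theta A_{ab})$ is the rotation by angle $\pm\theta$ in the $(e_a,e_b)$-plane, we have
\[
  Q_{ab}^\pm p-p=(\cos\theta-1)(p_ae_a+p_be_b)\pm\sin\theta\,(p_ae_b-p_be_a).
\]
Taking the sum and the difference of the $+$ and $-$ versions gives that the linear span contains
\[
  S_{ab}:=(\cos\theta-1)(p_ae_a+p_be_b),
  \qquad
  D_{ab}:=\sin\theta\,(p_ae_b-p_be_a).
\]
Here $\cos\theta-1\neq0$ and $\sin\theta\ne0$ because $\theta\in(0,\pi)$. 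Hence the span contains $p_ae_a+p_be_b$ and $p_ae_b-p_be_a$ for every pair $a<b$.

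For fixed $a<b$, these two vectors span the $(e_a,e_b)$-plane whenever $(p_a,p_b)\neq(0,0)$, since the determinant of the corresponding $2\times 2$ system is $p_a^2+p_b^2$. Because $|p|=1$, some coordinate $p_{a_0}$ is nonzero. Pairing $a_0$ with every other index $b$, the span contains the planes spanned by $e_{a_0}$ and $e_b$ for all $b\ne a_0$. Together these contain every $e_c$, so the span is all of $\R^{n+1}$.

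There is no real obstacle here; the only points needing care are that both $\sin\theta$ and $1-\cos\theta$ must be nonzero, which is exactly why $\theta\in(0,\pi)$ is assumed, and the use of $|p|=1$ to ensure that some coordinate of $p$ is nonzero. The identity $Q_0$ serves only as the base point $p$ of the affine span.
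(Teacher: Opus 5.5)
Your proof is correct and uses the same mechanism as the paper: the difference and the sum of $Q_{ab}^{+}p-p$ and $Q_{ab}^{-}p-p$ isolate $2\sin\theta\,A_{ab}p$ and $2(\cos\theta-1)P_{ab}p$, exactly as in the paper's argument. The only difference is presentation. The paper argues dually: a functional annihilating all differences is parallel to $p$ because the $A_{ab}p$ span $p^\perp$, and it then vanishes by the sum identity. You instead show spanning directly, via the $2\times 2$ determinant $p_a^2+p_b^2$ on the coordinate planes through a nonzero coordinate $p_{a_0}$; this is slightly more elementary but equivalent.
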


\begin{proof}
Fix $p\in\Sph^n$ and suppose $\xi\in\R^m$ is a linear functional that is constant on $\{Qp:Q\in\mathcal Q_\theta\}$. We need to show that $\xi=0$, because this means that the set $\{Qp:Q\in\mathcal Q_\theta\}$ is not contained in any proper affine hyperplane, so it is affine spanning.

Since $I\in\mathcal Q_\theta$, the constancy of $\xi$ implies that
\begin{equation}\label{eq:constant-functional}
  \xi\cdot Qp=\xi\cdot p
  \qquad \text{for all}\ Q\in\mathcal Q_\theta \,.
\end{equation}
Subtracting the equations for $Q_{ab}^{+}$ and $Q_{ab}^{-}$ gives
\[
  0=\xi\cdot(Q_{ab}^{+}-Q_{ab}^{-})p
  =2\sin\theta\,\xi\cdot A_{ab}p.
\]
The vectors $A_{ab}p$ span $p^\perp$: the matrices $A_{ab}$ span $\so(m)$, and for $z\perp p$ the skew matrix $z\otimes p-p\otimes z$ sends $p$ to $z$.  Hence $\xi=\lambda p$ for some $\lambda\in\R$.

Note that 
\[
  Q_{ab}^{+}+Q_{ab}^{-}-2I
  =2(\cos\theta-1)P_{ab},
\]
where $P_{ab}$ projects onto $\operatorname{span}\{e_a,e_b\}$. Inserting this into \eqref{eq:constant-functional} gives
\[
  0=2(\cos\theta-1)\lambda \, p\cdot P_{ab}p \,.
\]
Since $p\neq 0$, there is a pair $(a,b)$ with $p_a^2+p_b^2 = p\cdot P_{ab}p>0$. This, together with $\cos\theta\neq 1$, leads to the desired conclusion $\lambda=0$ and therefore $\xi=0$.
\end{proof}


\subsection{Periodic averaging on the sphere}\label{sec:averaging}

In this subsection we derive a simple homogenization result on $\Sph^n$ that will be useful in the next subsection.

\begin{lemma}[One-dimensional periodic averaging]\label[lemma]{lem:one-d-averaging}
Let $a\in L^\infty(\R)$ be one-periodic and $\bar a:=\int_0^1a(s)\,\dd s$.  Then
\[
  a(jt)\weakstar\bar a
  \quad\text{in }L^\infty((-1,1))
\]
as $j\to\infty$ through positive integers.
\end{lemma}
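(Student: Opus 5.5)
The plan is the classical Riemann--Lebesgue-type argument for periodic functions: check the claim on indicator functions of intervals, where it is an explicit computation, and pass to all of $L^1((-1,1))$ by density, using that the family $a(j\cdot)$ is bounded in $L^\infty$. Since $L^\infty((-1,1))$ is the dual of $L^1((-1,1))$, weak-star convergence means
\[
  \int_{-1}^1 a(jt)\phi(t)\,\dd t\to\bar a\int_{-1}^1\phi(t)\,\dd t
  \qquad\text{for every }\phi\in L^1((-1,1)).
\]
Moreover $\|a(j\cdot)\|_{L^\infty}\le\|a\|_{L^\infty}$ uniformly in $j$.

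First, I would treat $\phi=\mathbf 1_{(\alpha,\beta)}$ with $-1\le\alpha<\beta\le1$. Substituting $s=jt$ gives
\[
  \int_\alpha^\beta a(jt)\,\dd t=\frac1j\int_{j\alpha}^{j\beta}a(s)\,\dd s .
\]
The interval $(j\alpha,j\beta)$ contains $\lfloor j(\beta-\alpha)\rfloor-1$ disjoint full periods, and each period contributes exactly $\bar a$ by periodicity. The leftover part has length at most $2$, so it contributes at most $2\|a\|_{L^\infty}$. Hence
\[
  \left|\frac1j\int_{j\alpha}^{j\beta}a(s)\,\dd s-(\beta-\alpha)\bar a\right|\le\frac{C\|a\|_{L^\infty}}{j}\to0.
\]
By linearity the claim then holds for all step functions.

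Second, given $\phi\in L^1$ and $\varepsilon>0$, I would choose a step function $\psi$ with $\|\phi-\psi\|_{L^1}<\varepsilon$. Then
\[
  \left|\int a(jt)\phi\,\dd t-\bar a\int\phi\,\dd t\right|
  \le\left|\int a(jt)\psi\,\dd t-\bar a\int\psi\,\dd t\right|+2\|a\|_{L^\infty}\varepsilon .
\]
Here I used $|\bar a|\le\|a\|_{L^\infty}$. Taking $\limsup_{j\to\infty}$ kills the first term by the first step, and since $\varepsilon$ is arbitrary the claim follows.

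There is no real obstacle. The only point needing care is the bookkeeping of partial periods at the two endpoints in the first step, and this only needs $a\in L^1_{\mathrm{loc}}$ with the uniform $L^\infty$ bound. The restriction to integer $j$ is not actually needed.
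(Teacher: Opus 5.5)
Your proof is correct. It differs from the paper's in the dense test class and in how the key estimate is obtained, though the underlying mechanism is the same.

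The paper subtracts the mean and introduces the primitive $A(t)=\int_0^t(a-\bar a)\,\dd s$, which is one-periodic and hence bounded. It then integrates by parts against $\varphi\in C_c^1((-1,1))$ to get $\int_{-1}^1(a(jt)-\bar a)\varphi\,\dd t=-j^{-1}\int_{-1}^1A(jt)\varphi'(t)\,\dd t=O(\|\varphi'\|_{L^1}/j)$. It concludes by density of $C_c^1$ in $L^1$ together with the uniform $L^\infty$ bound, exactly as in your second step.

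Your period counting on intervals is the same cancellation without integration by parts: $\int_\alpha^\beta(a(jt)-\bar a)\,\dd t=j^{-1}\bigl(A(j\beta)-A(j\alpha)\bigr)$. So your $O(1/j)$ bound is just the boundedness of $A$ evaluated at the endpoints. Your $C$ absorbs both the leftover piece and the mismatch $|N-j(\beta-\alpha)|\,|\bar a|\le 2\|a\|_{L^\infty}$, where $N$ is the number of full periods.

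The paper's version is shorter and packages the cancellation in one identity. Yours is more hands-on: it works directly with indicator test functions and needs no justification of integration by parts for the Lipschitz function $A(j\cdot)$. Your remark that integrality of $j$ is not needed is correct, and it applies equally to the paper's argument.
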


\begin{proof}
Replacing $a$ by $a-\bar a$, we may assume that $a$ has mean-value zero. Let
\[
  A(t):=\int_0^t a(s)\,\dd s.
\]
Since $a$ is one-periodic and has mean zero, $A$ is one-periodic. Clearly, it is bounded. For $\varphi\in C_c^1((-1,1))$, integration by parts gives
\[
  \int_{-1}^1a(jt)\varphi(t)\,\dd t
  =-\frac1j\int_{-1}^1A(jt)\varphi'(t)\,\dd t\to0.
\]
Uniform $L^\infty$ boundedness and density of $C_c^1$ in $L^1$ prove the weak-star convergence.
\end{proof}

\begin{lemma}[Periodic averaging on $\Sph^n$]\label[lemma]{lem:sphere-averaging}
Under the assumptions of \Cref{lem:one-d-averaging},
\begin{equation}\label{eq:sphere-average}
  a(jx_{n+1})\weakstar\bar a
  \quad\text{in }L^\infty(\Sph^n).
\end{equation}
\end{lemma}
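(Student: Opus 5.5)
To prove \eqref{eq:sphere-average}, I will reduce it to \Cref{lem:one-d-averaging} by integrating over the level sets of the height function $t=x_{n+1}$. The sequence $a(jx_{n+1})$ is bounded in $L^\infty(\Sph^n)$ by $\|a\|_{L^\infty}$. Since $L^1(\Sph^n)$ is the predual of $L^\infty(\Sph^n)$, it therefore suffices to show
\[
  \int_{\Sph^n}a(jx_{n+1})\varphi(x)\,\dd\sigma(x)\to\bar a\int_{\Sph^n}\varphi\,\dd\sigma
\]
for every $\varphi\in L^1(\Sph^n)$. In fact a dense subclass suffices, for instance $\varphi\in C(\Sph^n)$. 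Indeed, the error from approximating a general $\varphi$ in $L^1$ is controlled uniformly in $j$ by $\|a\|_{L^\infty}\|\varphi-\varphi_k\|_{L^1}$.

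The main step is the coarea/slicing formula on the sphere. Write $x=(\sqrt{1-t^2}\,\omega,t)$ with $\omega\in\Sph^{n-1}$ and $t\in(-1,1)$. Then
\[
  \dd\sigma=(1-t^2)^{(n-2)/2}\,\dd t\,\dd\omega,
\]
where $\dd\omega$ is surface measure on $\Sph^{n-1}$. Define
\[
  \Phi(t):=(1-t^2)^{(n-2)/2}\int_{\Sph^{n-1}}\varphi\bigl(\sqrt{1-t^2}\,\omega,t\bigr)\,\dd\omega .
\]
By Fubini, $\Phi\in L^1((-1,1))$ with $\int_{-1}^1\Phi\,\dd t=\int_{\Sph^n}\varphi\,\dd\sigma$. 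Moreover,
\[
  \int_{\Sph^n}a(jx_{n+1})\varphi\,\dd\sigma=\int_{-1}^1a(jt)\Phi(t)\,\dd t .
\]
Note that $\Phi\in L^1$ holds for every $\varphi\in L^1(\Sph^n)$, so no density argument is actually needed on the sphere side. This also avoids worrying about the weight being singular when $n=2$ at $t=\pm1$; in that case the exponent is $0$ anyway.

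By \Cref{lem:one-d-averaging}, $a(jt)\weakstar\bar a$ in $L^\infty((-1,1))$. Testing this against $\Phi\in L^1((-1,1))$ gives
\[
  \int_{-1}^1a(jt)\Phi(t)\,\dd t\to\bar a\int_{-1}^1\Phi\,\dd t=\bar a\int_{\Sph^n}\varphi\,\dd\sigma,
\]
which is the claim. The only point requiring care is the justification of the slicing formula and the integrability of $\Phi$. Both are routine consequences of Fubini's theorem in the spherical coordinates above, so I do not expect a genuine obstacle.
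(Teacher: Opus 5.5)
Your proposal is correct and follows the paper's proof essentially verbatim. Like the paper, you slice by the height $t=x_{n+1}$, push the test function forward to $(1-t^2)^{(n-2)/2}\int_{\Sph^{n-1}}\varphi(\sqrt{1-t^2}\,\omega,t)\,\dd\omega\in L^1((-1,1))$, which has the same total integral, and invoke \Cref{lem:one-d-averaging}. One minor remark: the weight $(1-t^2)^{(n-2)/2}$ is bounded for every $n\ge2$, so there is no singularity to worry about in any dimension.
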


\begin{proof}
We parametrize $x=(\sqrt{1-t^2}\,\omega,t)$ with $\omega\in\Sph^{n-1}$ and $-1<t<1$. For $\Phi\in L^1(\Sph^n)$, we have
\[
  \int_{\Sph^n}\Phi(x)a(jx_{n+1})\,\dd\sigma(x)
  =\int_{-1}^1a(jt)G_\Phi(t)\,\dd t
\]
with
\[
  G_\Phi(t)
  : = (1-t^2)^{(n-2)/2}
  \int_{\Sph^{n-1}}\Phi(\sqrt{1-t^2}\,\omega,t)\,\dd\sigma_{n-1}(\omega).
\]
Moreover,
\[
  \|G_\Phi\|_{L^1(-1,1)}
  \le\|\Phi\|_{L^1(\Sph^n)}
\]
and
\[
  \int_{-1}^1G_\Phi(t)\,\dd t
  =\int_{\Sph^n}\Phi\,\dd\sigma.
\]
Thus, \Cref{lem:one-d-averaging} gives the claimed weak-star convergence.
\end{proof}


\subsection{Oscillatory pinning}\label{sec:pinning}

We recall that the family $\mathcal Q_\theta$ of orthogonal matrices was defined in Subsection \ref{sec:rotations}.

\begin{lemma}[Smooth periodic rotation profile]\label[lemma]{lem:profile}
Fix $\rho>0$.  There are a nonzero sufficiently small angle $\theta$, a smooth one-periodic map $B:\R\to\so(n+1)$, and pairwise disjoint intervals $I_Q\subset(0,1)$ of positive length, indexed by $Q\in\mathcal Q_\theta$, such that
\begin{enumerate}[label=\textup{(\roman*)}]
\item $\exp(B(t))=Q$ for $t\in I_Q$;
\item $|\exp(B(t))p-p|\le\rho$ for every $t\in\R$ and $p\in\Sph^n$.
\end{enumerate}
\end{lemma}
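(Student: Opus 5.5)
The plan is to build $B$ piecewise: on each interval $I_Q$ it is constant, equal to a fixed logarithm of $Q$; between consecutive intervals it moves along a smooth path in $\so(n+1)$. For $Q_0=I$ I take the logarithm $0$. For $Q_{ab}^{\pm}=\exp(\pm\theta A_{ab})$ I take $\pm\theta A_{ab}$. So every prescribed value of $B$ lies in the set $\{0\}\cup\{\pm\theta A_{ab}\}$. This set is contained in the closed ball of radius $\theta|A_{ab}|$ in $\so(n+1)$, where $|A_{ab}|$ is the same constant for all pairs ($\sqrt2$ in Hilbert--Schmidt norm, $1$ in operator norm).

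Concretely, I list the finitely many elements of $\mathcal Q_\theta$ as $Q^{(1)},\dots,Q^{(K)}$, with $K=1+(n+1)n$. I choose disjoint closed subintervals $I_{Q^{(k)}}=[\alpha_k,\beta_k]\subset(0,1)$ of positive length, ordered increasingly, with gaps between them and before the first and after the last. With $L_k$ the chosen logarithm, I define $B$ on $[0,1]$ as follows. It equals $L_k$ on $[\alpha_k,\beta_k]$. On each gap $[\beta_k,\alpha_{k+1}]$ it equals $(1-\psi)L_k+\psi L_{k+1}$, and on the wrap-around gap from $\beta_K$ to $1+\alpha_1$ it interpolates from $L_K$ to $L_1$. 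Here $\psi$ is a smooth monotone transition function equal to $0$ near the left endpoint and $1$ near the right endpoint. Because $\psi$ is locally constant near the endpoints, the pieces glue smoothly, and the result extends one-periodically to a smooth map $B:\R\to\so(n+1)$. Since $\so(n+1)$ is a linear space, the convex combinations stay skew-symmetric, and property (i) holds by construction.

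For (ii), every value $B(t)$ is a convex combination of elements of norm at most $\theta$ in operator norm, so $\|B(t)\|_{\mathrm{op}}\le\theta$ for all $t$. Then
\[
|\exp(B(t))p-p|\le\|\exp(B(t))-I\|_{\mathrm{op}}\le e^{\|B(t)\|_{\mathrm{op}}}-1\le e^{\theta}-1
\]
for every unit vector $p$. I therefore choose $\theta\in(0,\pi)$ with $e^\theta-1\le\rho$, and this gives (ii) uniformly in $t$ and $p$.

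The only point needing care, and not a real obstacle, is smoothness at the junctions and across the period. I handle it by making $\psi$ flat near both ends, so that $B$ is locally constant near every $\alpha_k,\beta_k$ and near $0\equiv1$. No interpolation in the group is needed, because working in the Lie algebra avoids any issue with the non-convexity of $SO(n+1)$.
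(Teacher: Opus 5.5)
Your proposal is correct and is essentially the paper's proof. You use the same logarithms $0$ and $\pm\theta A_{ab}$, a smooth curve in their convex hull in $\so(n+1)$ that is constant on the intervals $I_Q$ and joins consecutive values by segments with flat cutoffs, and the same bound $\|\exp(B(t))-I\|_{\mathrm{op}}\le e^{\theta}-1$. The only cosmetic difference is that the paper starts and ends the loop at $0$, so that $B=0$ near $0$ and $1$, whereas you use a wrap-around interpolation. That works equally well, though in your version $B$ need not be locally constant near $0\equiv 1$; smoothness there simply follows from the interpolation being smooth inside that gap.
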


\begin{proof}
Choose
\[
  0<\theta<\min\{\pi,\log(1+\rho)\}.
\]
For $Q\in\mathcal Q_\theta$, define
\[
  R_{Q_0}:=0,
  \qquad
  R_{Q_{ab}^{\pm}}:=\pm\theta A_{ab},
\]
so that $\exp(R_Q)=Q$.  Let
\[
  \mathcal R_\theta
  :=\{0\}\cup\{\pm\theta A_{ab}:1\le a<b\le n+1\}.
\]
If $A\in\operatorname{conv}\mathcal R_\theta$, then $\|A\|_{\mathrm{op}}\le\theta$.  Consequently,
\[
  \|\exp(A)-I\|_{\mathrm{op}}
  \le e^{\|A\|_{\mathrm{op}}}-1
  \le e^\theta-1<\rho.
\]
Construct a smooth closed curve
\[
  B:\R/\Z\longrightarrow\operatorname{conv}\mathcal R_\theta
\]
which visits every vertex $R_Q$ and is constant equal to $R_Q$ on a nondegenerate interval $I_Q\subset(0,1)$.  This can be done by joining consecutive vertices by line segments parametrized with smooth cutoff functions that are constant near their endpoints.  Start and finish at $0$, with $B=0$ near both $0$ and $1$, so that the periodic extension is smooth.  Then $\exp(B(t))=Q$ on $I_Q$, and for every $p\in\Sph^n$,
\[
  |\exp(B(t))p-p|
  \le\|\exp(B(t))-I\|_{\mathrm{op}}
  <\rho.
  \qedhere
\]
\end{proof}

The following is the main result of this section.

\begin{Proposition}[Oscillatory pinning]\label[Proposition]{prop:pinning}
Let $f_0\in C^\infty(\Sph^n;\Sph^n)$ and $\rho>0$.  There is a sequence $F_j\in C^\infty(\Sph^n;\Sph^n)$ such that
\begin{equation}\label{eq:F-properties}
  \deg F_j=\deg f_0,
  \qquad\|F_j-f_0\|_{L^\infty}\le\rho,
\end{equation}
and with the following property: if $g_j\in W^{1,n}(\Sph^n;\Sph^n)$ and
\begin{equation}\label{eq:bounded-difference}
  \sup_j\int_{\Sph^n}|\D(F_j-g_j)|^n\,\dd\sigma<\infty,
\end{equation}
then
\begin{equation}\label{eq:pinning-convergence}
  F_j-g_j\to0
  \quad\text{strongly in }L^n(\Sph^n;\R^{n+1}).
\end{equation}
\end{Proposition}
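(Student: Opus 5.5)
We take $\rho$ given, apply \Cref{lem:profile} to obtain $\theta$, the periodic profile $B$ and the intervals $I_Q$, and define
\[
  F_j(x):=\exp\bigl(B(jx_{n+1})\bigr)f_0(x),\qquad j\in\mathbb N.
\]
Since $\exp(B(t))\in SO(n+1)$, $F_j$ is smooth and sphere-valued. By \Cref{lem:profile}(ii) we have $|F_j-f_0|\le\rho$ pointwise, and the degree is unchanged. For the degree, we use the homotopy $x\mapsto\exp(sB(jx_{n+1}))f_0(x)$, $s\in[0,1]$, which is smooth and sphere-valued. Alternatively, we take $\rho<2$ and apply \Cref{lem:close-degree}. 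This gives \eqref{eq:F-properties}.

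For the pinning property, set $h_j:=F_j-g_j$. Since $|F_j|=|g_j|=1$, we have $|h_j|\le2$, and by \eqref{eq:bounded-difference} $(h_j)$ is bounded in $W^{1,n}$. By Rellich, any subsequence has a further subsequence with $h_j\to h$ strongly in $L^n$ and a.e. It suffices to show $h=0$. The constraint $|g_j|^2=1$ reads $|F_j-h_j|^2=1$, which is equivalent to
\[
  2F_j\cdot h_j=|h_j|^2 .
\]
We now localize on the sets where the profile is frozen. Let $\chi_Q$ be the indicator of $I_Q$, extended one-periodically. On $\{\chi_Q(jx_{n+1})=1\}$ we have $F_j=Qf_0$, so
\[
  \chi_Q(jx_{n+1})\bigl(2Qf_0\cdot h_j-|h_j|^2\bigr)=0 .
\]
The bracket converges strongly in $L^1$ to $2Qf_0\cdot h-|h|^2$, because $h_j\to h$ in $L^n$ and the $h_j$ are bounded. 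By \Cref{lem:sphere-averaging}, $\chi_Q(jx_{n+1})\weakstar|I_Q|>0$. A weak-star times strong-$L^1$ product passes to the limit, so $|I_Q|\,(2Qf_0\cdot h-|h|^2)=0$ a.e. Hence, for a.e. $x$,
\[
  Qf_0(x)\cdot h(x)=\tfrac12|h(x)|^2\qquad\text{for every }Q\in\mathcal Q_\theta .
\]
Thus the linear functional $\xi=h(x)$ is constant on $\{Qp:Q\in\mathcal Q_\theta\}$ with $p=f_0(x)$. By \Cref{lem:affine-spanning} this set affinely spans $\R^{n+1}$, which forces $h(x)=0$. Since every subsequence has a further subsequence converging to $0$, the whole sequence satisfies \eqref{eq:pinning-convergence}.

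The main obstacle is the limit passage in the product $\chi_Q(jx_{n+1})\cdot(\text{nonlinear expression in }h_j)$, which is where periodic averaging has to meet compactness. It is handled by putting the weak-star factor on the periodic indicator and the strong convergence on the $h_j$. The $W^{1,n}$ bound is used only to obtain the strong $L^n$ (hence $L^1$) compactness of $h_j$. The other subtle point is that the identity $2F_j\cdot h_j=|h_j|^2$ holds with the oscillating $F_j$, and freezing $F_j=Qf_0$ on the sets $I_Q$ is what makes the limit identity independent of $j$.
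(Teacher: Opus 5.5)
Your proposal is correct and follows the paper's proof essentially step by step: the same $F_j=\exp(B(jx_{n+1}))f_0$, the same homotopy for the degree, the identity $2F_j\cdot h_j=|h_j|^2$ frozen to $2Qf_0\cdot h_j=|h_j|^2$ on the plateaus, periodic averaging against strong $L^1$ convergence, and the affine-spanning contradiction. The only cosmetic difference is that the paper extracts strong $L^2$ convergence and then upgrades to $L^n$ via $|h_j|\le 2$, whereas you use Rellich directly in $L^n$; both are valid.
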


\begin{proof}
Let $\theta$ and $B$ be as in \Cref{lem:profile}, and define
\begin{equation}\label{eq:Fj-def}
  F_j(x):=\exp(B(jx_{n+1}))f_0(x).
\end{equation}
The claimed $L^\infty$ estimate on $F_j-f_0$ follows from \Cref{lem:profile}(ii).  The maps
\[
  H_s(x):=\exp(sB(jx_{n+1}))f_0(x),
  \qquad0\le s\le1,
\]
form a smooth homotopy from $f_0$ to $F_j$, so the degrees of $F_j$ and $f_0$ agree.

Let $g_j\in W^{1,n}(\Sph^n;\Sph^n)$ satisfy \eqref{eq:bounded-difference} and put
\[
  h_j:=F_j-g_j.
\]
Because both maps are sphere-valued,
\begin{equation}\label{eq:h-bound}
  |h_j|\le2
\end{equation}
and
\begin{equation}\label{eq:sphere-identity}
  2F_j\cdot h_j=|h_j|^2
  \qquad\text{a.e.}
\end{equation}
The bounds \eqref{eq:bounded-difference} and \eqref{eq:h-bound} make $(h_j)$ bounded in $W^{1,n}(\Sph^n;\R^{n+1})$. 

We are going to show that $h_j\to 0$ strongly in $L^n(\Sph^n;\R^{n+1})$, and we are going to show this by proving that any subsequence has a further subsequence that converges strongly to zero. 

In order to simplify the presentation, we do not reflect the choice of a subsequence in the notation. Since $n\ge2$, Rellich's compactness theorem gives, after taking a subsequence,
\begin{equation}\label{eq:h-L2}
  h_j\to h\quad\text{strongly in }L^2(\Sph^n)
\end{equation}
and we need to show that $h=0$.

Fix $Q\in\mathcal Q_\theta$ and let $\chi_Q$ be the one-periodic extension of the characteristic function of $I_Q$.  Where $\chi_Q(jx_{n+1})=1$, one has $F_j=Qf_0$, and therefore
\begin{equation}\label{eq:plateau-identity}
  \chi_Q(jx_{n+1})
  \bigl(2Qf_0\cdot h_j-|h_j|^2\bigr)=0.
\end{equation}
By \Cref{lem:sphere-averaging},
\begin{equation}\label{eq:chi-average}
  \chi_Q(jx_{n+1})\weakstar |I_Q|
  \quad\text{in }L^\infty(\Sph^n).
\end{equation}
Also, \eqref{eq:h-L2} implies
\begin{equation}\label{eq:nonlinear-L1}
  2Qf_0\cdot h_j-|h_j|^2
  \to2Qf_0\cdot h-|h|^2
  \quad\text{strongly in }L^1.
\end{equation}
Indeed,
\[
  \||h_j|^2-|h|^2\|_{L^1}
  \le\|h_j-h\|_{L^2}\|h_j+h\|_{L^2}\to0.
\]

Testing \eqref{eq:plateau-identity} against an arbitrary $\varphi\in L^\infty(\Sph^n)$ and using \eqref{eq:chi-average} and \eqref{eq:nonlinear-L1}, we obtain
\[
  |I_Q|\int_{\Sph^n}\varphi\,\bigl(2Qf_0\cdot h-|h|^2\bigr)\,\dd\sigma=0.
\]
Since $|I_Q|>0$, this implies
\begin{equation}\label{eq:limit-affine-equations}
  2Qf_0\cdot h=|h|^2
  \qquad\text{a.e.~on}\ \Sph^n \,.
\end{equation}
This holds for every $Q\in\mathcal Q_\theta$.

Since $\mathcal Q_\theta$ is finite, there is a set of full measure on which \eqref{eq:limit-affine-equations} holds for all $Q\in\mathcal Q_\theta$.  At such a point $x$, if $h(x)\ne0$, then every vector $Qf_0(x)$ lies in the proper affine hyperplane
\[
  \left\{y\in\R^{n+1}:y\cdot h(x)=\frac{|h(x)|^2}{2}\right\}.
\]
This contradicts \Cref{lem:affine-spanning}.  Hence $h=0$ a.e.

We have shown that every subsequence of $(h_j)$ has a further subsequence converging to zero strongly in $L^2$.  Therefore the full sequence converges to zero in $L^2$.  Since $|h_j|\le2$,
\[
  \|h_j\|_{L^n}^n\le 2^{n-2}\|h_j\|_{L^2}^2\to0,
\]
which proves \eqref{eq:pinning-convergence}.
\end{proof}


\section{Proof of the sharp lower bound}\label{sec:lower}

In this section we complete the proof of Theorem \ref{thm:main} by proving that
\begin{equation}
    \label{eq:lowergoal}
    \sup_{f\in\E_{d_1}^{(n)}}
    \inf_{g\in\E_{d_2}^{(n)}}
    \int_{\Sph^n}|\D f-\D g|^n\,\dd\sigma \geq \kappa_n \, |d_1-d_2| \,.
\end{equation}
We fix $d_1,d_2\in\Z$ and assume $d_1\neq d_2$, for otherwise there is nothing to prove.

Our proof will depend on a parameter $0<\rho<1$ that will tend to zero at the very end of the argument. Also, we fix a smooth map $f_0:\Sph^n\to\Sph^n$ with $\deg f_0=d_1$.  Applying \Cref{prop:pinning} we obtain $F_j \in C^\infty(\Sph^n;\Sph^n)$ with
\begin{equation}\label{eq:Fj-main}
  \deg F_j=d_1,
  \qquad\|F_j-f_0\|_{L^\infty}\le\rho
\end{equation}
and with the property that \eqref{eq:bounded-difference} implies \eqref{eq:pinning-convergence}.

For each $j$, let
\begin{equation}\label{eq:Mj}
  M_j:=\inf_{g\in\E_{d_2}^{(n)}}
  \int_{\Sph^n}|\D(F_j-g)|^n\,\dd\sigma.
\end{equation}
and set
\[
L:=\liminf_{\ell\to\infty}M_\ell \,.
\]
After passing to a subsequence we may assume that
\begin{equation}\label{eq:Mj-limit}
  M_j\to L \,.
\end{equation}

By \Cref{prop:upper},
\begin{equation}\label{eq:Mj-upper}
  0\le M_j\le\kappa_n \, |d_1-d_2|\,.
\end{equation}
Choose $g_j\in\E_{d_2}^{(n)}$ such that
\begin{equation}\label{eq:near-minimizer}
  0 \le\int_{\Sph^n}|\D(F_j-g_j)|^n\,\dd\sigma - M_j \to 0 \,.
\end{equation}
It follows from \eqref{eq:Mj-upper} and \eqref{eq:near-minimizer} that $F_j-g_j$ is bounded in $W^{1,n}(\Sph^n;\R^{n+1})$ and therefore, by the construction of $F_j$ (see \Cref{prop:pinning}), we conclude that
\begin{equation}\label{eq:h-to-zero}
  F_j- g_j \to0\quad\text{strongly in }L^n(\Sph^n).
\end{equation}
After passing to a subsequence, we may assume that
\begin{equation}\label{eq:nu-limit}
  |\D (F_j-g_j)|^n\,\dd\sigma\weakstar\nu
\end{equation}
as finite Radon measures.  Testing against $1$ and using \eqref{eq:Mj-limit} and \eqref{eq:near-minimizer} gives
\begin{equation}\label{eq:nu-total}
  \nu(\Sph^n)=L.
\end{equation}


\subsection{Removing the oscillation and preserving degree}
Define
\begin{equation}\label{eq:zj}
  z_j:=f_0-F_j + g_j.
\end{equation}
Then, by \eqref{eq:Fj-main} and since $|g_j|=1$,
\begin{equation}\label{eq:z-close}
  |z_j-g_j|\le\rho,
  \qquad |z_j|\ge1-\rho>0.
\end{equation}
Define
\begin{equation}\label{eq:uj}
  u_j:=\Pi(z_j)=\frac{z_j}{|z_j|}
  \in W^{1,n}(\Sph^n;\Sph^n).
\end{equation}
We have, using $|g_j|=1$ and the first bound in \eqref{eq:z-close},
\begin{align}\label{eq:u-g-close}
  |u_j-g_j|
  & \le|u_j-z_j|+|z_j-g_j|
  =\bigl|1 - |z_j| \bigr|+|z_j-g_j| \notag \\
  & = \bigl| |g_j| - |z_j| \bigr|+|z_j-g_j| \le 2 |z_j-g_j|
  \le2\rho<2 \,.
\end{align}
Thus, by \Cref{lem:close-degree},
\begin{equation}\label{eq:u-degree}
  \deg u_j=\deg g_j=d_2.
\end{equation}

Using $|f_0|=1$, we obtain
\begin{align}\label{eq:u-f0-pointwise}
    |u_j - f_0| & \le |u_j - z_j| + |z_j - f_0| = |1-|z_j|| + |z_j - f_0| \notag \\
    & = ||f_0|-|z_j|| + |z_j - f_0| \leq 2 |z_j - f_0| \leq 2 |F_j - g_j| \,.
\end{align}
Thus, by \eqref{eq:h-to-zero},
\begin{equation}\label{eq:u-to-f0}
  u_j\to f_0\quad\text{strongly in }L^n(\Sph^n) \,.
\end{equation}

From \eqref{eq:radial-bound} and the second bound in \eqref{eq:z-close},
\begin{equation}\label{eq:Du-bound}
  |\D u_j|
  \le\frac{|\D z_j|}{1-\rho}
  =\frac{|\D f_0-\D (F_j-g_j)|}{1-\rho}.
\end{equation}
Hence $(u_j)$ is bounded in $W^{1,n}$.  Passing to another subsequence,
\begin{equation}\label{eq:mu-limit}
  |\D u_j|^n\,\dd\sigma\weakstar\mu
\end{equation}
as finite Radon measures.


\subsection{Comparison of atomic energy}
By \eqref{eq:u-degree}, \eqref{eq:u-to-f0}, and \Cref{prop:atomic-defect}, there are points $a_1,\dots,a_I \in\Sph^n$ such that
\begin{equation}\label{eq:mu-atom-lower}
  \sum_{i=1}^I\mu(\{a_i\})\ge\kappa_n \, |d_1-d_2|\,.
\end{equation}
Fix $\delta>0$. It is elementary to observe that there is a $C_\delta<\infty$ such that for any matrices $A,B$ one has
\begin{equation}\label{eq:power-inequality}
  |A+B|^n \le(1+\delta)|A|^p+C_{n,\delta}|B|^n \,.
\end{equation}
Applying this to \eqref{eq:Du-bound} with $A=-\D (F_j-g_j)$ and $B=\D f_0$ gives
\begin{equation}\label{eq:measure-density-comparison}
  |\D u_j|^n
  \le\frac1{(1-\rho)^n}
  \left((1+\delta)|\D (F_j-g_j)|^n
    +C_{n,\delta}|\D f_0|^n\right).
\end{equation}
Passing to the weak star limit we obtain the measure inequality
\begin{equation}\label{eq:measure-comparison}
  \mu\le\frac1{(1-\rho)^n}
  \left((1+\delta)\nu
    +C_{n,\delta}|\D f_0|^n\,\dd\sigma\right).
\end{equation}
By Radon regularity, this inequality holds on all Borel sets, in particular on singletons.  The last measure is absolutely continuous and has no atoms.  Therefore, for every $a\in\Sph^n$,
\[
  \mu(\{a\})
  \le\frac{1+\delta}{(1-\rho)^n}\nu(\{a\})
\]
and, after letting $\delta\downarrow0$,
\begin{equation}\label{eq:atom-comparison}
  \mu(\{a\})
  \le\frac1{(1-\rho)^n}\nu(\{a\}).
\end{equation}
Combining \eqref{eq:mu-atom-lower}, \eqref{eq:atom-comparison}, and \eqref{eq:nu-total},
\begin{align*}
  \kappa_n \, |d_1-d_2|
  &\le\sum_{i=1}^I\mu(\{a_i\})
  \le\frac1{(1-\rho)^n}\sum_{i=1}^I\nu(\{a_i\})\\
  &\le\frac1{(1-\rho)^n}\nu(\Sph^n)
  =\frac{L}{(1-\rho)^n}.
\end{align*}
Thus
\begin{equation}\label{eq:L-lower}
  L\ge(1-\rho)^n\kappa_n \, |d_1-d_2|\,. 
\end{equation}

Since every $F_j\in\E_{d_1}^{(n)}$, we have
$$
\sup_{f\in\E_{d_1}^{(n)}}
    \inf_{g\in\E_{d_2}^{(n)}}
    \int_{\Sph^n}|\D f-\D g|^n\,\dd\sigma \geq M_j \,.
$$
Taking the liminf as $j\to\infty$, we obtain
$$
\sup_{f\in\E_{d_1}^{(n)}}
    \inf_{g\in\E_{d_2}^{(n)}}
    \int_{\Sph^n}|\D f-\D g|^n\,\dd\sigma \geq L \,.
$$
This, combined with \eqref{eq:L-lower} gives
$$
\sup_{f\in\E_{d_1}^{(n)}}
    \inf_{g\in\E_{d_2}^{(n)}}
    \int_{\Sph^n}|\D f-\D g|^n\,\dd\sigma \geq (1-\rho)^n\kappa_n \, |d_1-d_2|\,. 
$$
Since $\rho>0$ is arbitrary, we obtain the claimed inequality \eqref{eq:lowergoal}.
Together with \Cref{prop:upper}, this proves \Cref{thm:main}.


\section*{Acknowledgements}
R.L.F. acknowledges partial support through US National Science Foundation grant DMS-1954995, as well as through the German Research Foundation through EXC-2111-390814868, TRR 352--Project-ID 470903074, and FR 2664/3-1.  P.I. acknowledges partial support from the US NSF CAREER grant DMS-2152401, US NSF grant DMS-2554183, a Simons Fellowship, and a Humboldt Research Fellowship for Experienced Researchers. The authors acknowledge the use of AI tools during the exploratory stage of this project.  All mathematical arguments and proofs in the final manuscript were checked and written by the authors.


\begin{thebibliography}{BMS16a}
\small\sloppy

\bibitem[BBC90]{BethuelBrezisCoron1990}
F.~Bethuel, H.~Brezis, and J.-M.~Coron,
\emph{Relaxed energies for harmonic maps},
in \emph{Variational Methods (Paris, 1988)},
Progress in Nonlinear Differential Equations and Their Applications, vol.~4,
Birkh\"auser Boston, Boston, MA, 1990, pp.~37--52.
\href{https://doi.org/10.1007/978-1-4757-1080-9_3}{doi:10.1007/978-1-4757-1080-9\_3}.

\bibitem[Bre23]{Brezis2023}
H.~Brezis,
\emph{Some of my favorite open problems},
Rend. Lincei Mat. Appl. \textbf{34} (2023), no.~2, 307--335.
\href{https://doi.org/10.4171/RLM/1008}{doi:10.4171/RLM/1008}.

\bibitem[BC83]{BrezisCoron1983}
H.~Brezis and J.-M.~Coron,
\emph{Large solutions for harmonic maps in two dimensions},
Comm. Math. Phys. \textbf{92} (1983), no.~2, 203--215.
\href{https://doi.org/10.1007/BF01210846}{doi:10.1007/BF01210846}.

\bibitem[BL01]{BrezisLi2001}
H.~Brezis and Y.~Li,
\emph{Topology and Sobolev spaces},
J. Funct. Anal. \textbf{183} (2001), no.~2, 321--369.
\href{https://doi.org/10.1006/jfan.2000.3736}{doi:10.1006/jfan.2000.3736}.

\bibitem[BMS16]{BrezisMironescuShafrir2016}
H.~Brezis, P.~Mironescu, and I.~Shafrir,
\emph{Distances between homotopy classes of $W^{s,p}(\mathbb S^N;\mathbb S^N)$},
ESAIM Control Optim. Calc. Var. \textbf{22} (2016), no.~4, 1204--1235.
\href{https://doi.org/10.1051/cocv/2016037}{doi:10.1051/cocv/2016037}.

\bibitem[BMS18]{BrezisMironescuShafrir2018}
H.~Brezis, P.~Mironescu, and I.~Shafrir,
\emph{Distances between classes in $W^{1,1}(\Omega;\Sph^1)$},
Calc. Var. Partial Diﬀerential Equations \textbf{57} (2018) no.~1, Art. 14, 32. \href{https://doi.org/10.1007/s00526-017-1280-z}{doi:10.1007/s00526-017-1280-z}

\bibitem[BN95]{BrezisNirenberg1995}
H.~Brezis and L.~Nirenberg,
\emph{Degree theory and BMO. I. Compact manifolds without boundaries},
Selecta Math. (N.S.) \textbf{1} (1995), no.~2, 197--263.
\href{https://doi.org/10.1007/BF01671566}{doi:10.1007/BF01671566}.

\bibitem[DVS26]{DetailleVanSchaftingen}
A.~Detaille and J.~Van Schaftingen,
\emph{Heterotopic energy for Sobolev mappings},
Commun. Contemp. Math. \textbf{28} (2026), no.~5, Paper No.~2640006.
\href{https://doi.org/10.1142/S0219199726400067}{doi:10.1142/S0219199726400067}.

\bibitem[LS14]{LeviShafrir2014}
S.~Levi and I.~Shafrir,
\emph{On the distance between homotopy classes of maps between spheres},
J. Fixed Point Theory Appl. \textbf{15} (2014), no.~2, 501--518.
\href{https://doi.org/10.1007/s11784-014-0156-5}{doi:10.1007/s11784-014-0156-5}.

\bibitem[RS07]{RubinsteinShafrir2007}
J.~Rubinstein and I.~Shafrir,
\emph{The distance between homotopy classes of $\mathbb S^1$-valued maps in multiply connected domains},
Israel J. Math. \textbf{160} (2007), 41--59.
\href{https://doi.org/10.1007/s11856-007-0055-1}{doi:10.1007/s11856-007-0055-1}.

\bibitem[SU83]{SchoenUhlenbeck1983}
R. Schoen and K. Uhlenbeck, 
\emph{Boundary regularity and the Dirichlet problem for harmonic maps}, J. Diﬀer. Geom. \textbf{18} (1983), 253--268.
\href{https://doi.org/10.4310/jdg/1214437663}{doi:10.4310/jdg/1214437663}

\bibitem[Sha18]{Shafrir2018}
I.~Shafrir,
\emph{On the distance between homotopy classes in $W^{1/p,p}(\mathbb S^1;\mathbb S^1)$},
Confluentes Math. \textbf{10} (2018), no.~1, 125--136.
\href{https://doi.org/10.5802/cml.48}{doi:10.5802/cml.48}.

\end{thebibliography}
\end{document}